\renewcommand{\epsilon}{\varepsilon}
\DeclareMathOperator{\dvg}{div} \DeclareMathOperator{\spt}{spt}
\DeclareMathOperator{\dist}{dist} \DeclareMathOperator{\lip}{Lip}
\DeclareMathOperator{\trace}{trace}\DeclareMathOperator{\diam}{diam}
  \DeclareMathOperator{\proj}{proj}
     \DeclareMathOperator{\image}{Image}
\def\R{\mathbb{R}}
\def\C{\mathcal{C}}
\def\N{\mathbb{N}}
\def\d{\delta}
\def\a{\alpha}
\def\e{\epsilon}
\def\r{\rho}
\def\m{\mu}
\def\s{\sigma}
\def\o{\omega}
\def\l{\lambda}
\def\k{\kappa}
\def\H{\mathcal{H}}
\def\wt{\widetilde}
\def\mv{\mu_{{V}}}
\def\dmv{d \mu_{{V}}}
\def\Vsing{\|\d V\|_{\text{sing}}}
\def\B{\mathcal{B}}
\def\ov{\overline}
\def\res{\hbox{ {\vrule height .25cm}{\leaders\hrule\hskip.2cm}}\hskip5.0\mu}
\def\var{\underline{\underline{\text{v}}}}
\newtheorem{theorem}{Theorem}[section]
\newtheorem{lemma}[theorem]{Lemma}
\newtheorem{remark}[theorem]{Remark}
\newtheorem{corollary}[theorem]{Corollary}
\newtheorem{definition}[theorem]{Definition}
\newtheorem{claim}[theorem]{Claim}
\begin{document}

\begin{title}
{Allard-type boundary regularity for $C^{1,\a}$ boundaries}
\end{title}
\begin{author}
{Theodora Bourni}
\end{author}
\date{}
\maketitle
\vspace{-1cm}

\begin{abstract}
In this paper we show boundary monotonicity formulae for rectifiable varifolds having a $C^{1,\a}$ ``boundary". In particular, we show that the area ratios of balls centered at this ``boundary'' satisfy a nice monotonicity formula, similar to that for interior balls proved in \cite{al1}. This extends the boundary monotonicity formulae of Allard \cite{al2}, which require that the boundary is $C^{1,1}$.  As a corollary, the regularity results of \cite{al2} extend to this case and provide a regularity result for rectifiable varifolds with a $C^{1,\a}$ ``boundary''.
\end{abstract}

\section{Introduction}
In 1972 Allard \cite{al1} proved a remarkable regularity theorem for $k$-varifolds $V$ in $\R^{n+k}$. He showed that, under appropriate assumptions on the first variation and measure of $V$, $V$ is a $C^{1,\gamma}$ manifold, for some $\gamma\in (0,1)$. Replacing the varifold with a smooth manifold $M$ in $B_1(0)$, the unit ball in $\R^{n+k}$, his theorem  roughly says that if the mean curvature of $M$ is in  $L^p(\H^k)$, $p>k$, and if the area of $M$ is sufficiently close to that of a unit $k$-dimensional ball, then $M\cap B_{1/2}(0)$ is a graph of a $C^{1,\gamma}$ function with estimates, where $\gamma=1- k/p$. Later, in 1975, Allard \cite{al2} showed that this regularity result can be extended to $k$-varifolds with a $C^{1,1}$ ``boundary'', i.e. to  $k$-varifolds  that have bounded variation away from a $(k-1)$-dimensional $C^{1,1}$ manifold $B$, which we refer to as the boundary. Considering again the special ``smooth'' case; that is, when $M$ is a smooth $k$-dimensional submanifold of $B_1(0)\setminus B$ with $0\in B$, Allard's boundary regularity theorem roughly says the following. If  the mean curvature of $M$ is in  $L^p(\H^k)$, $p>k$, and if the area of $M$ is sufficiently close to that of a unit $k$-dimensional half-ball, then $M\cap B_{1/2}(0)$ is a graph of a $C^{1,\gamma}$ function with estimates, where $\gamma=1-k/p$.

These theorems are not only useful in regularity theory, but they also provide a powerful tool for compactness theorems for smooth manifolds. The purpose of this paper is to show that Allard's boundary regularity theorem still holds in the case of $C^{1, \a}$ boundaries for any $\a\in (0,1]$ (see Theorem ~\ref{main}).

One of the key ingredients of these regularity theorems is establishing {\it area monotonicity formulae}, which are tools that allow us to compare the measure, $\mv(B_r(x))$, of the varifold in ambient balls  with the area, $\o_k r^k$, of the corresponding Euclidean balls. In particular, these formulae  provide us with a   quantity that involves the ratios $\o_k^{-1}r^{-k}\mv(B_r(x))$ (called the ``area ratios'') and is monotone in $r$. To establish them, in the case of a varifold with ``boundary'' $B$,  i.e. when the varifold  has bounded variation away from $B$, one needs to show that the total variation of the varifold is actually a Radon measure (everywhere, not only away from $B$). 

The first part of this paper, presented in Section ~\ref{First variation and monotonicity}, is devoted to showing that the varifold has locally bounded first variation (everywhere) and that the area monotonicity formulae still hold in the case when the boundary $B$ is a $C^{1,\a}$ manifold for any $\a\in(0,1]$. The main difficulty in considering such boundaries is that, for $\a<1$, there is no neighborhood of $B$ on which the nearest point projection is well defined. Therefore, the function $\r_0(x)=\dist(x, B)$ is not necessarily differentiable almost  everywhere on the support of the varifold, a property which in Allard's paper \cite{al2} is extensively used to prove these monotonicity formulae. In our paper, we use a Whitney partition of $\R^{n+k}\setminus B$ to define a new ``distance'' function that is both smooth and also ``close'' enough to the standard distance $\r_0$ (see \eqref{rhot}). This allows us to carry out the necessary computations and estimates for the monotonicity formulae to hold. We also remark here that in \cite{al2}, even though all the estimates depend only on the $C^{1,1}$-norm of $B$,  it is always assumed that $B$ is smooth. In our paper, this new ``distance'' function allows us to drop this hypothesis; in particular, no proof requires higher than $C^{1,\a}$ regularity of the boundary.

Having established the monotonicity formulae, the proof of the boundary regularity theorem, which is stated in Section ~\ref{Boundary Regularity Theorem} (Theorem ~\ref{main}), follows the steps  of that of Allard's for $C^{1,1}$ boundaries, \cite[Section 4]{al2}. The proof is presented also in Section \ref{Boundary Regularity Theorem}. The proofs of many of the main ingredients, as one can see in Section \ref{Boundary Regularity Theorem}, are parallel to that of \cite{al2},
  with the exception of a height-tilt estimate, \cite[Lemma 4.5]{al2}, in whose proof the nearest point projection  is once again used. This lemma establishes a bound for the tilt-excess (in $L^2$) of the varifold, depending on the height-excess (in $L^2$), the mean curvature of the varifold and the $C^{1,1}$-norm of the boundary. Using again a Whitney partition  to replace the nearest point projection, we show that this theorem still holds in our case, with the bound now depending on the $C^{1,\a}$-norm of the boundary, rather than the $C^{1,1}$-norm (Theorem ~\ref{new tilt bound lemma}).

We begin, in Section ~\ref{notation and preliminaries}, by introducing some notation and establishing the setup with which we will be working.

\section{Notation and Preliminaries}\label{notation and preliminaries}
Let $n, k\in \N$, $n\ge 1$, $k\ge 2$ and let $B$ be a $C^{1,\a}$ closed $(k-1)$-dimensional submanifold of $\R^{n+k}$ passing through the origin, with $0<\a\le1$. 
Then, there exists a radius $R>0$ such that $B\cap B_{4R}(0)$ is a graph of a $C^{1,\a}$ function over $T_0B$, the tangent space of $B$ at $0$, and a non-negative constant $\k$ such that 
\begin{equation} \label{bdry smoothness}
\begin{split}
|\proj_{\N_bB}(y-b)|&\le\kappa |y-b|^{1+\a}\\
\|\proj_{\N_yB}-\proj_{\N_bB}\|&\le \k|y-b|^\a
\end{split}
\,\,\,\,\forall \, y,b \in B\cap B_{4R}(0). 
\end{equation}

We use the notation $T_xB$ for the tangent space of $B$ at $x$, $N_xB$ for the normal space of $B$ at $x$ and $\proj_{T_xB}$, $\proj_{N_xB}$ for the projections onto the two spaces respectively. Finally, $B_r(x)\subset\R^{n+k}$ will denote the $(n+k)$-dimensional ball of radius $r$ centered at $x\in\R^{n+k}$ and, for any $m\in\N$, $\o_m$ will denote the $m$-dimensional area of the open unit ball centered at the origin in $\R^m$.

\begin{definition}\label{distdef}For any $x\in B_{2R}(0)$ we define $\r_0(x)$ to be the distance of $x$ from $B$, i.e. $\r_0(x)=\dist(x, B)$, and $\bar{x}$ will denote a point on $B$ such that $|x-\bar{x}|=\r_0(x)$. \end{definition}
Note that there is not necessarily a unique such point $\bar{x}$, as in Definition ~\ref{distdef}. We also remark that for $x\in B_{2R}(0)$ any point $\bar x$, as in Definition ~\ref{distdef}, must be in $B_{4R}(0)$ and furthermore $x-\bar x\in N_{\bar x} B$.

Under the above assumptions it is easy to check that the following inequality holds.
\begin{remark}\label{prelimlem}
  Let $x\in B_{R}(0)\setminus B$ and $y\in B_{\r_0(x)/2}(x)$. Then 
  \begin{equation*} \label{bdry smoothness 2}
  |\proj_{N_{\bar{x}}B}(y-\bar{x})-(y-\bar{y})|\le c\k\r_0(y)^{1+\a}
\end{equation*}
 for some absolute constant $c$.
 To see this, note that
 \[\begin{split}|\proj_{N_{\bar{x}}B}(y-\bar{x})-(y-\bar{y})|&\le |(\proj_{N_{\bar{x}}B}-\proj_{N_{\bar{y}}B})(y-\bar{x})|+|\proj_{N_{\bar{y}}B}({y}-\bar {x}-(y-\bar y))|\\
&\le \k|\bar x-\bar y|^\a|y-\bar x|+\k|\bar y-\bar x|^{1+\a}\le 36\k\r_0(y)^{1+\a},
\end{split}\]
with the last inequality being a simple application of the triangle inequality.
\end{remark}

We consider a rectifiable $k$-varifold, $V=\var(M,\theta)$, where $M$ is a countably $k$-rectifiable, $\H^k$-measurable subset of $\R^{n+k}$ and $\theta$ a locally $\H^k$-integrable function on $M$, and we let
 $\mv=\H^k\res\theta$ be the weight measure of $V$ (cf. \cite[\S 15]{LSgmt}).  In this paper we always assume that the varifold $V$ satisfies
 \begin{equation}\label{P1}
0\in\spt V\,,\,\, \theta(x)\ge 1\text{  for  } \m_{V}\text{-almost every }x\in \R^{n+k}
\end{equation}
and  that the total variation measure of  $\d V$ (the first variation of $V$), when restricted to $B_R(0)\setminus B$, is a Radon measure. In particular, we assume that there exists a $\mv$-measurable function
$H:B_R(0)\setminus B\rightarrow\R^{n+k}$ with $|H(x)|=D_{\mv}\|\d V\|(x)$ (where $\|\d V\|$ is the total variation measure of $\d V$, cf. \cite[Section 4]{al1} or \cite[\S 39]{LSgmt}) for all $x\in
B_R(0)\setminus B$ such that
\begin{equation}\label{fvcpt}
\d V(X)=-\int_{B_R(0)} X\cdot H \dmv
\end{equation}
for any $C^1$ vector field $X$ with compact support in $B_R(0)$ and such that $X(y)=0$ for all $y\in B$. For $H$ we further assume that $H\in L^p(\mv\res(B_R(0)\setminus B))$ for some $p>k$ and we set
\begin{equation}\label{mc}
\left( \int_{B_R(0)\setminus B}|H|^p\dmv\right)^{\frac1p} =\Lambda.
\end{equation}

\begin{remark}\label{rescale}
Let $B$ and $V$ be as above (satisfying \eqref{bdry smoothness}-\eqref{mc}), with $\k=\k_1$, $R=R_1$, $\Lambda=\Lambda _1$ as defined in \eqref{bdry smoothness} and \eqref{mc}. A simple rescaling argument implies the following. Given $R_0>0$, $\k_0>0$ and $\Lambda _0> 0$  we can assume that $R_1= R_0$,  $\k_1\le \k_0$ and $\Lambda_1\le \Lambda _0$.

To see this, take $\theta=\max\left\{\frac{R_0}{R_1}, \left(\frac{\k_1}{\k_0}\right)^{\frac{1}{\a}}, \left(\frac{\Lambda_1}{\Lambda_0}\right)^{\frac{p}{p-k}}\right\}$. Rescaling $B$ and $V$ by $\theta$, we obtain $\wt B$, a $C^{1,\a}$ $(k-1)$-dimensional submanifold of $\R^{n+k}$ passing through the origin and $\wt V$ a rectifiable $k$-varifold respectively, satisfying \eqref{bdry smoothness}-\eqref{mc} in $B_{R_0}(0)\subset B_{\theta R_1}(0)$ and with
\[\k=\k_1\theta^{-a}\le \k_0\text{   and   }\Lambda=\Lambda_1\theta^{\frac kp-1}\le \Lambda_0.\]
This remark will allow us in what follows to assume, without loss of generality, that $R=1$ and furthermore that $\k, \Lambda$ are smaller that a chosen  constant.
\end{remark}

Throughout this paper the letter $c$ will denote a constant which possibly depends on the given variables $n, k, p, \a$. When different constants 
appear in the course of a proof we will keep the same letter $c$ unless the constant depends on some different parameters.

\section{First Variation and Monotonicity}\label{First variation and monotonicity}
Throughout this section we assume that $B$, $V$ are as defined in Section ~\ref{notation and preliminaries}, i.e. they satisfy \eqref{bdry smoothness} and properties \eqref{P1}, \eqref{fvcpt} and \eqref{mc}, for some $R, \k$ and $\Lambda$. We want to show that the total first variation of $V$ is a Radon measure in the whole ball $B_R(0)$. We will do this by using appropriate vector fields in the first variation that vanish on $B$. For this reason we need a smooth ``distance to $B$'' function. As mentioned in the introduction, the function $\rho_0(\cdot)=\dist(\cdot, B)$ is not differentiable everywhere on $\spt V$ and therefore we want to define a new smooth distance function that is ``close'' to $\r_0$.
We will need the following definitions.

 Let $\mathcal{W}$ be a Whitney partition of $B_{R}(0)\setminus B$ (cf. \cite[Chapter 5]{KrantzParks}). Then
  $$B_{R}(0)\setminus B\subset \cup_{\C\in \mathcal{W}}\C,$$
  where the elements $\C$ of the collection $\mathcal{W}$ are closed cubes satisfying  $\dist(\C,B)>0$ and
  $$\diam\C\le \dist(\C,B)\le 3\diam\C.$$
  For each $\C\in \mathcal{W}$, we let $x_\C\in \C$ be the center of the cube $\C$ and $y_\C\in B$ be a point such that $|x_\C-y_\C|=\r_0(x_\C)$ (using Definition ~\ref{distdef}, $y_\C=\bar x_\C$).
  Finally we let $\{\phi_\C\}_{\C\in \mathcal W}$ be a partition of unity
  suboordinate to the covering $\mathcal{W}$ and such that
  \begin{equation*}\label{DphiC}
  |D\phi_\C(x)|\le c\rho_0(x)^{-1}
  \end{equation*}
  where $c$ is an absolute constant (cf. \cite[Chapter 5]{KrantzParks}).
  
  We define now the function $\r_\C(x)=|\proj_{N_{y_\C}B}(x-y_\C)|$ for any $\C\in \mathcal W$,
  and define a new ``distance'' function by
  \begin{equation}\label{rhot}
  \r (x):=\left(\sum_{\C\in \mathcal{W}}\phi_C(x)\rho^2_\C(x)\right)^\frac12=\left(\sum_{\C\in \mathcal{W}}\phi_\C(x) |\proj_{N_{y_\C}B}(x-y_\C)|^2\right)^\frac12.
  \end{equation}
 Note that both $\rho_\C$ and $\rho$ are smooth in $B_{R}(0)\setminus B$. For $x\in \C$ we have
\[|x-x_\C|\le \frac{\diam \C}{2}\le \frac{\dist(\C,B)}{2}\le \frac{\r_0(x_\C)}{2}\]
and thus Remark ~\ref{bdry smoothness 2}, applied with $x$ and $y$ replaced by $x_\C$ and $x$ respectively, yields
\begin{equation}\label{rcr}\begin{split}
&|\r_\C(x)-\r_0(x)|\le c\k\r_0(x)^{1+\a}\text{      and  hence  }\\
 (1-c\k \r_0(x)^a)\r_0(x)&\le \r_\C(x)\le  (1+c\k \r_0(x)^a)\r_0(x),\,\,\forall x\in \C,
\end{split}\end{equation} 
where $c$ is an absolute constant. Assuming now that $\k$ is small enough, so that $c\k R^\a<\frac12$, with $c$ as in \eqref{rcr}, we obtain a relation between $\r_0$ and $\r$. In particular, we have that
\[\frac{\r_0(x)^2}{4}\le  (1-c\k \r_0(x)^\a)^2\r_0(x)^2\le\r^2(x)=\sum_{\C\in \mathcal{W}}\phi_C(x)\rho^2_\C(x)\le (1+c\k \r_0(x)^\a)^2\r_0(x)^2\le \frac{9\r_0(x)^2}{4}\]
and thus
\begin{equation}\label{rrhot}
 \frac{\r_0(x)}{2}\le(1-c\k \r_0(x)^\a)\r_0(x)\le \r(x)\le (1+c\k \r_0(x)^\a)\r_0(x)\le \frac{3\r_0(x)}{2}.
\end{equation}

  Furthermore, we have that
  
  \begin{equation}\label{rhoCDrhoC}
\r_\C(x)  D\r_\C(x)=\proj_{N_{y_\C}B}(x-y_\C)=: X_\C(x)
\end{equation}
and
\begin{equation*}
\begin{split}\rho(x) D\rho(x)=\frac12 D\rho^2(x)&= \sum_{\C\in \mathcal{W}}\phi_\C(x)\rho_\C(x)D\rho_\C(x)+\frac12\sum_{\C\in \mathcal{W}}D\phi_\C(x)\rho^2_\C(x)\\
&= \sum_{\C\in \mathcal{W}}\phi_\C(x) X_\C(x)+\frac12\sum_{\C\in \mathcal{W}}D\phi_\C(x)(\rho^2_\C(x)-\r_0^2(x)),
\end{split}
\end{equation*}
the last equality being true because $\sum_{\C\in \mathcal{W}}D\phi_\C(x)=0$.
Using \eqref{rcr}, we can estimate $|\rho^2_\C(x)-\r_0^2(x)|$ as follows.
\[\begin{split}|\rho^2_\C(x)-\r_0^2(x)|=&|\rho_\C(x)-\r_0(x)||\rho_\C(x)+\r_0(x)|\\
\le& c\k\r_0(x)^{1+\a} (2+c\k \r_0(x)^a)\r_0(x)\le c\k\r_0^{2+\a}(x),\,\,\forall x\in \C.
\end{split}\]
Hence, since $|D\phi_\C(x)|\le c\r_0(x)^{-1}$, we have that
\begin{equation}\label{rhoDrho}
\begin{split}\rho(x) &D\rho(x)= \sum_{\C\in \mathcal{W}}\phi_\C(x) X_\C(x)+Y(x),
\text{  where   }X_\C(x)=\proj_{N_{y_\C}B}(x-y_\C)\\&\text{  and           } \quad\quad|Y(x)|\le c\k  \r_0^{1+\a}(x)\le c\k  \r^{1+\a}(x),
\end{split}\end{equation}
with the last inequality being true by \eqref{rrhot}.
Finally, we note that \eqref{rhoDrho}, combined with \eqref{rcr} and \eqref{rrhot}, implies that
\[\begin{split}|\rho(x) D\rho(x)|\le (1+c\k \r_0(x)^\a)\r_0(x)+|Y(x)|&\le\frac{1+c\k \r_0(x)^\a}{1-c\k \r_0(x)^\a}\r(x)+c\k\r^{1+\a}(x)\\
&\le (1+4c\k \r_0(x)^\a)\r(x)+c\k\r^{1+\a}(x). \end{split}\]
Hence, using once more \eqref{rrhot}, we have
\begin{equation}\label{Drho}
|D\r(x)|\le 1+c\k \r(x)^\a.
\end{equation}

From now on  we assume that $\k$ is small enough so that $c\k R^\a<\frac12$, with $c$ as in \eqref{rcr}, and thus the above estimates \eqref{rrhot}, \eqref{rhoDrho} and \eqref{Drho} hold.
\begin{theorem} [First Variation Formula] \label{first variation formula}Let $B$ and $V$ be as defined in Section ~\ref{notation and preliminaries}, i.e. they satisfy \eqref{bdry smoothness} and properties \eqref{P1}, \eqref{fvcpt} and \eqref{mc}, for some $R, \k$ and $\Lambda$ and assume further that $c\k R^\a<1$, where $c$ is an absolute constant (see Remark ~\ref{fv rmk}). Then,
for any $C^1$ vector field $X$ with compact support in $B_R(0)$,
\begin{equation}\label{1st variation equation}
\d V(X)=-\int_{B_R(0)\setminus B} X\cdot H \dmv + \int_B X\cdot \eta \,d\Vsing,
\end{equation}
where $\eta$ is a $\|\d V\|$ measurable unit vector field, such that $\eta(y) \in N_yB$ for all $y \in B$.
\end{theorem}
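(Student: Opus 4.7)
The plan is to feed \eqref{fvcpt}, which is available only for test fields vanishing on $B$, with a one-parameter family of cutoffs of $X$ built from the smooth substitute distance $\r$ of \eqref{rhot}, and then pass to the limit. Fix $X\in C^1_c(B_R(0);\R^{n+k})$ and pick $\psi_\epsilon\in C^1([0,\infty))$ with $\psi_\epsilon\equiv 0$ on $[0,\epsilon]$, $\psi_\epsilon\equiv 1$ on $[2\epsilon,\infty)$, $0\le\psi_\epsilon\le 1$, and $|\psi_\epsilon'|\le 2/\epsilon$. Then $\psi_\epsilon(\r)X$ is $C^1$, has compact support in $B_R(0)$, and vanishes on $B$, so \eqref{fvcpt} together with the tangential product rule ${\rm div}_V(\psi_\epsilon(\r)X)=\psi_\epsilon(\r){\rm div}_V X+\psi_\epsilon'(\r)X\cdot\nabla^V\r$ gives
\[
\int \psi_\epsilon(\r){\rm div}_V X\, d\mv + \int \psi_\epsilon'(\r) X\cdot\nabla^V\r\, d\mv=-\int \psi_\epsilon(\r) X\cdot H\, d\mv.
\]

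\textbf{Passing to the limit.} Because $\H^k(B)=0$ and $V$ is $k$-rectifiable, $\mv(B)=0$; dominated convergence (using $H\in L^p_{\loc}$, $p>k$) forces
\[
\int \psi_\epsilon(\r){\rm div}_V X\, d\mv \to \d V(X),\qquad \int \psi_\epsilon(\r) X\cdot H\, d\mv \to \int_{B_R(0)\setminus B} X\cdot H\, d\mv.
\]
Hence the remaining middle integral admits a limit, and setting $\Lambda(X):=-\lim_{\epsilon\to 0}\int \psi_\epsilon'(\r) X\cdot\nabla^V\r\, d\mv$ we get $\d V(X)=-\int_{B_R(0)\setminus B} X\cdot H\, d\mv+\Lambda(X)$ for every admissible $X$.

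\textbf{Radon representation.} The functional $\Lambda$ kills $X$ with $\spt X\subset B_R(0)\setminus B$ (for small $\epsilon$, $\psi_\epsilon'(\r)\equiv 0$ on $\spt X$), so once represented as a measure it is automatically concentrated on $B$. For the $C^0$ bound $|\Lambda(X)|\le C_K\sup|X|$ with $\spt X\subset K$, \eqref{Drho} yields $|\nabla^V\r|\le 1+c\k\r^\a\le 2$ and thus
\[
\Big|\int \psi_\epsilon'(\r) X\cdot\nabla^V\r\, d\mv\Big|\le \frac{4}{\epsilon}\sup|X|\,\mv(\{\r\le 2\epsilon\}\cap K).
\]
The needed density estimate $\mv(\{\r_0\le t\}\cap K)\le C(K)\,t$ follows from Allard's interior monotonicity \cite{al1} applied to $V$ in $B_R(0)\setminus B$: decompose into dyadic shells $\{2^{-j}t\le \r_0\le 2^{-j+1}t\}$, cover each shell by $\sim (2^{-j}t)^{-(k-1)}$ balls of radius $\sim 2^{-j}t$ centered in $\spt V$ whose associated monotonicity ball stays in $B_R(0)\setminus B$, bound the $\mv$-mass in each such ball by $C(2^{-j}t)^k$, and sum the resulting geometric series using the $(k-1)$-dimensional $C^{1,\a}$ graph structure of $B$. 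Riesz representation then produces a vector-valued Radon measure, and its polar decomposition combined with the Lebesgue decomposition of $\|\d V\|$ against $\mv$ produces the Borel unit field $\eta$ with $\Lambda(X)=\int_B X\cdot\eta\, d\Vsing$.

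\textbf{Normality of $\eta$ and main obstacle.} To pin down $\eta(y)\in N_yB$, feed the limiting identity a field $X\in C^1_c(B_R(0))$ that is tangent to $B$ along $B$. By \eqref{rhoDrho} and \eqref{bdry smoothness}, $D\r(x)$ lies within $O(\k\r^\a)$ of a unit vector in $N_{y_\C}B$, while $X(x)=X(\bar x)+O(\r)$ with $X(\bar x)\in T_{\bar x}B$; Hölder continuity of the Gauss map on $B$ then gives $X\cdot D\r=O(\r^\a)$ on $\{\r\le 2\epsilon\}$, and the density estimate forces $\Lambda(X)=0$ after writing $X\cdot\nabla^V\r=X\cdot D\r-X\cdot\proj_{(T_xV)^\perp}D\r$ and handling the second piece via an approximate-tangent-plane argument. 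The main technical obstacle is the density estimate $\mv(\{\r_0\le t\}\cap K)\le C(K)t$: the interior upper-density bound from \cite{al1} degenerates as $\r_0\to 0$, so the covering must be executed shell-by-shell with constants tracked carefully against the $C^{1,\a}$ geometry of $B$ encoded in \eqref{bdry smoothness}. Everything else reduces to dominated convergence and measure-theoretic bookkeeping once this estimate is in hand.
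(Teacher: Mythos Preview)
Your overall architecture---cut off by $\psi_\epsilon(\r)$, pass to the limit, represent the residual as a singular measure on $B$, then test with tangential fields to show $\eta\perp TB$---matches the paper's. The genuine gap is in the step you yourself flag as the ``main technical obstacle'': the tubular density estimate $\mv(\{\r_0\le t\}\cap K)\le C(K)t$.

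Your proposed covering argument is circular. Interior Allard monotonicity only tells you that $s^{-k}\mv(B_s(x))$ is (essentially) \emph{increasing} for $s<\r_0(x)$; to extract an \emph{upper} bound $\mv(B_s(x))\le Cs^k$ you would have to compare with a larger ball of known mass. But for centers $x$ in the shell $\{2^{-j}t\le\r_0\le 2^{-j+1}t\}$, the largest admissible radius is $\sim 2^{-j}t$ itself, and $\mv(B_{2^{-j}t}(x))$ is exactly the quantity you do not control---it depends on the mass of $V$ near $B$, which is what you are trying to bound. So the step ``bound the $\mv$-mass in each such ball by $C(2^{-j}t)^k$'' is unjustified, and the geometric series never gets started. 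The paper avoids this by producing the density bound \emph{directly from the first variation}: one plugs into \eqref{fvcpt} the Whitney-built radial field $X=\sum_\C\phi_\C\,\psi(\r)\,\chi\,X_\C$ with $X_\C(x)=\proj_{N_{y_\C}B}(x-y_\C)$, uses the algebraic fact $\dvg_M X_\C=\trace(\pmb m\,\proj_{N_{y_\C}B})\ge 1$ (since $\dim T_xM+\dim N_{y_\C}B>n+k$), and integrates the resulting differential inequality to obtain the tubular monotonicity \eqref{monotonicity inequality}, from which $\limsup_{h\to0}h^{-1}\mv(T_h\cap K)<\infty$ follows.

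There is a second, related gap in the normality step. After splitting $X\cdot\nabla^V\r=X\cdot D\r-X\cdot\nabla^\perp\r$, the piece $X\cdot D\r=O(\r^\a)$ is harmless, but your ``approximate-tangent-plane argument'' for $X\cdot\nabla^\perp\r$ is not an argument: nothing so far forces $T_xM$ to be close to containing $D\r(x)$ as $\r\to0$. The paper gets precisely this control as a \emph{byproduct} of the tubular monotonicity, namely $\int_{T_r}\chi\,\r^{-1}|\nabla^\perp\r|^2\,d\mv<\infty$, which via Cauchy--Schwarz forces $h^{-1}\int_{T_h}|Z\cdot\nabla\r|\,d\mv\to0$ for the tangential extension $Z$. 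Without the tubular monotonicity inequality you have neither the density estimate nor this $\nabla^\perp\r$ control, so both halves of the proof are incomplete.
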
 
\begin{remark} \label{fv rmk}In Theorem ~\ref{first variation formula}, we specifically require that $\k, R$ are such that
$c\k R^\a<\frac12$, with $c$ as in \eqref{rcr} (so that the estimates \eqref{rrhot}, \eqref{rhoDrho} and \eqref{Drho} hold) and also $\k R^\a<\frac14$, so that  Claim ~\ref{ZZB}, which appears in the proof of the lemma, holds.
\end{remark}
\begin{proof}
We will first prove that $V$ has locally bounded variation in $B_R(0)$, i.e. we will show that for any compact subset $W\subset B_R(0)$ there exists a constant $c$ (depending on $W$) such that
\begin{equation}\label{bounded variation equation}
\d V(X)\le c \sup_{B_R(0)}|X|,
\end{equation}
for any $C^1$ vector field $X$ with support in $W$.

For any smooth function $\phi:\R\rightarrow \R$ we can write $\d V(X)$ as follows.
\begin{equation}\label{split 1st variation formula into 3 parts}
\begin{split}
\d V(X)=&\int_{B_R(0)} \dvg_M[(1-\phi(\r))X] \dmv+\int_{B_R(0)} \phi'(\r)\nabla\r\cdot X \dmv\\
&+\int_{B_R(0)}\phi(\r)\dvg_MX \dmv,
\end{split}
\end{equation}
where $\r=\r(x)$ is as defined in \eqref{rhot}.
Let $\{\phi_h\}_{0<h<1}$ be a family of smooth functions $\phi_h:\R\to\R$ such that 
\[\phi_h(t)=\begin{cases}1 \hbox{  for  }t\le h/2 \\ 0 \hbox{  for  }t\ge h\end{cases}, \,\phi_h'(t)\le0,\,|\phi'_h(t)|\le \frac3h,\] and so that
$\phi_h\stackrel{h\rightarrow 0}{\longrightarrow}\chi_{(-\infty,0]}$, the characteristic function of $(-\infty,0]$. 
Then, by property \eqref{fvcpt},
\begin{align*}
\int_{B_R(0)} \dvg_M[(1-\phi_h(\r))X] \dmv=
-\int_{B_R(0)} (1-\phi_h(\r))X\cdot H d\m_V \stackrel{h\rightarrow
0}{\longrightarrow}-\int_{{B_R(0)}\setminus B} X\cdot H d\m_V,
\end{align*}
since by \eqref{rrhot} we have that 
\[\lim_{h\to 0} \left\{x\in {B_R(0)}:\r(x)\ge \frac h2\right\}=\left\{x\in {B_R(0)}:\r_0(x)>0\right\}={B_R(0)}\setminus B.\]
Furthermore, since $\mv(B)=0$ we have
\begin{equation*}\
\int_{B_R(0)}\phi_h(\r)\dvg_MX \dmv\stackrel{h\rightarrow 0}{\longrightarrow}0,
\end{equation*}
where again we have used that by \eqref{rrhot} we have that
\[\lim_{h\to 0}\left\{x\in {B_R(0)}:\r(x)\le h\right\}= \{x\in {B_R(0)}:\r_0(x)\le 0\}=B\cap {B_R(0)}.\]
Hence, by using  \eqref{split 1st variation formula into 3 parts} with $\phi=\phi_h$ and letting $h\to 0$, we have that \eqref{bounded variation equation} is equivalent to showing that
\begin{equation}\label{bdry bounded variation of vector field}
\lim_{h\rightarrow 0}\frac1h\int_{T_h} |\nabla\r\cdot X| \dmv\le c\sup_{B_R(0)}|X|,
\end{equation}
where $T_h=\{x\in {B_R(0)}:\r(x)<h\}$. Here we have also used \eqref{mc}. Recalling the estimate \eqref{Drho}, we have $|\nabla\r|\le |D\r|\le 1+c\k \r^\a$.
Therefore for proving that $V$ has locally bounded variation in $B_R(0)$ it suffices to show that for any $W\subset\subset B_R(0)$ there exists a constant $c$ (depending on $W$) such that
\begin{equation}\label{bdry bounded variation of function}
\lim_{h\rightarrow 0}\frac1h\int_{T_h}\chi\dmv\le c,
\end{equation}
where $\chi$ is a  smooth function such that $\chi=1$ on $W$, $0\le\chi\le 1$ and with compact support in $B_R(0)$.

Given $\chi:B_R(0)\to\R$ a smooth non negative function with compact support, we define the vector field
  \begin{equation}
    \label{vector field X}
    X(x)=\sum_{\C\in\mathcal{W}}\phi_\C(x)\psi(\rho(x))\chi(x)X_\C(x),
  \end{equation}
  where   $\mathcal{W}$ is a Whitney partition of $B_{R}(0)\setminus B$, as defined in the beginning of this section, and
   $\psi:\R\to\R$ is a smooth
  non negative and non increasing function. Recall that $\phi_\C$ is a partition of unity
  suboordinate to  $\mathcal{W}$ and, for each $\C\in \mathcal{W}$, $X_\C$ is defined by
    \begin{equation}
    \label{vector field XC}
    X_\C(x)=\proj_{N_{y_\C}B}(x-y_\C),
  \end{equation}
where  $x_\C\in \C$ denotes the center of the cube $\C$ and $y_\C\in B$ is a point such that $|x_\C-y_\C|=\r_0(x_\C)$.
  $X$ is then a smooth vector field that vanishes on $B$ and thus by \eqref{fvcpt} 
  \begin{equation}
    \label{first variation of X}
\d V(X)=\int \dvg_MX\dmv=-\int X\cdot H\dmv.
  \end{equation}

Recalling the estimate \eqref{rhoDrho}, we have that
\begin{equation}\label{YC}
\sum_{\C\in\mathcal{W}}\phi_\C(x)X_\C(x)=\rho(x) D\rho(x)+ Y(x)\text{ , with $Y$ satisfying  }|Y(x)|\le c\k \rho(x)^{1+\a}.
\end{equation}
Furthermore,
$$\dvg_M X_\C=\trace(\pmb{m}(x)DX_\C)\ge 1$$
where $\pmb{m}(x)$ denotes the matrix of the projection onto $T_xM$ and $DX_\C$ is the matrix of the projection onto
$N_{y_\C}B$.
Therefore, (omitting the argument $x$) we obtain
\begin{equation*}
  \begin{split}
  \dvg_M X =\sum_{\C\in \mathcal{W}}\dvg_M\left(\phi_\C\psi(\rho)\chi X_\C\right)
  \ge& \chi \psi( \rho)+\chi\psi'(\rho)\nabla \rho\cdot \sum_{\C\in \mathcal{W}}\phi_\C X_\C
  \\
  &+\psi(\rho)\nabla\chi\cdot\sum_{\C\in \mathcal{W}}\phi_\C X_\C
 +\psi(\rho)\chi\sum_{\C\in \mathcal{W}}\nabla\phi_\C\cdot X_\C.\\
  \ge& \chi \psi( \rho)+\chi \rho|D\rho|^2\psi'(\rho) + \chi\psi'(\rho)(\nabla\rho\cdot Y- \rho|\nabla^\perp\rho|^2)\\
  &+\psi( \rho) \nabla\chi\cdot\left(\rho
 D \rho+Y\right)
+\psi(\rho)\chi\sum_{\C\in \mathcal{W}}\nabla\phi_\C\cdot X_\C.
\end{split}
\end{equation*}
By the estimate on $|D\r|$, \eqref{Drho},  we have
\begin{equation}\label{Z}
|D\r(x)|^2=1+\zeta(x) \text{ , with $\zeta$ satisfying  }|\zeta(x)|\le c\k \rho(x)^{\a},
\end{equation} 
and thus we obtain
\begin{equation}
  \label{summing terms in dvg}
  \begin{split}
  \dvg_MX\ge& \chi \psi( \rho)+\chi \rho \psi'(\rho) + \chi\psi'(\rho)(\rho \zeta+\nabla\rho\cdot Y- \rho|\nabla^\perp\rho|^2)\\
  &+\psi( \rho) \nabla\chi\cdot\left(\rho
 D \rho+Y\right)
+\psi(\rho)\chi\sum_{\C\in \mathcal{W}}\nabla\phi_\C\cdot X_\C.
\end{split}
\end{equation}

Note now, since $\sum_{\C\in\mathcal W}D\phi_\C(x)=0$ and $|D\phi_\C(x)|\le c\r(x)^{-1}$, and by \eqref{rcr} and \eqref{rrhot}, that
\begin{equation}
  \label{g}
  |\sum_{\C\in \mathcal{W}}\nabla\phi_\C\cdot X_\C|=\left|\sum_{\C\in\mathcal{W}}\nabla\phi_\C(x)\cdot\left(X_\C(x)-(x-\bar{x})\right)\right|\le c\k\rho^\a.
\end{equation}

Using the estimates \eqref{summing terms in dvg} and \eqref{g} in \eqref{first variation of X} we obtain
\begin{equation}
  \label{rho inequality}
  \begin{split}
  \int\chi(\psi(\rho)+\rho\psi'(\rho))\dmv
  \le&-\int\chi\psi(\rho)\rho D\rho\cdot H\dmv-\int\chi\psi(\rho)Y\cdot H\dmv\\
  &+\int\psi(\rho)\chi c\k\rho^\a\dmv-\int\psi(\rho)\nabla\chi(\rho D\rho+Y) \\
  &-\int\chi\psi'(\rho)(\rho \zeta+\nabla\rho\cdot
Y)\dmv+\int\chi\psi'(\rho)\rho|\nabla^\bot\rho|^2\dmv. 
\end{split}
\end{equation}

Let $\gamma:\R\rightarrow \R$ be a smooth function such that 
$$\gamma(t)=\begin{cases}1 \hbox{  for  }t\le
1/2\\ 0 \hbox{  for  }t\ge 1\end{cases}, \gamma'(t)\le 0 \,\,\, \forall t\,.$$ We will use
\eqref{rho inequality} with $\psi(\rho)=\gamma(\frac{\rho}{r})$ (for some $r>0$). Since
$$\rho\psi'(\rho)=\frac{\rho}{r}\gamma'\left(\frac{\rho}{r}\right)=-r\frac{\partial}{\partial r}
\left(\gamma\left(\frac{\rho}{r}\right)\right),$$ 
and because of the estimates  \eqref{YC}, \eqref{Drho} and \eqref{Z} for $|Y|$, $|\nabla \rho|$ and $|Z|$ respectively, this yields 
\begin{align}
  \label{gamma inequality}
\int\chi&\left(\left(1-c\k r^\a\right)\gamma\left(\frac{\rho}{r}\right)-\left(1+c\k r^\a\right)r\frac{\partial}{\partial r}
\left(\gamma\left(\frac{\rho}{r}\right)\right)\right)\dmv\notag\\
  \le&-\int\chi\gamma\left(\frac{\rho}{r}\right) \rho D\rho\cdot H\dmv -\int
  \chi \gamma\left(\frac{\rho}{r}\right) Y\cdot H\dmv\\
&-\int\gamma\left(\frac{\r}{r}\right) \nabla\chi\left(\rho D\rho+Y\right)\dmv
  -\int_U\chi
r\frac{\partial}{\partial r}
\left(\gamma\left(\frac{\rho}{r}\right)\right)|\nabla^\bot\rho|^2\dmv\notag.
\end{align}

Set 
\begin{equation}\label{Gamma}\Gamma=2c\k\a^{-1}, 
\end{equation}
where $c$ is the constant appearing on the LHS of \eqref{gamma inequality}. Note that 
\[\frac{e^{\Gamma r^\a}}{r}\frac{\partial}{\partial r}
\left(\gamma\left(\frac{\rho}{r}\right)\right)-e^{\Gamma r^\a}\frac{1-2c\k r^{\a}}{r^2}\gamma\left(\frac{\rho}{r}\right)=
\frac{\partial}{\partial r} \left(\frac {e^{\Gamma
r^\a}}{r}\gamma\left(\frac{\rho}{r}\right)\right).\] 
Hence, after multiplying \eqref{gamma inequality} by
$-(1+c\k r^\a)^{-1}e^{\Gamma r^\a}r^{-2}$ and noting that $-\frac{1-c\k r^\a}{1+c\k r^\a}\le-(1-2c\k r^\a)$, we obtain
\begin{align}
  \label{gamma inequality II}
  \frac{\partial}{\partial
  r}&\bigg(\frac{e^{\Gamma r^\a}}{r}\int\chi\gamma\left(\frac{\r}{r}\right)\dmv\bigg)\notag\\
  \ge&-\frac{e^{\Gamma r^\a}}{r^2}\int\gamma\left(\frac{\rho}{r}\right)\chi \rho |D \r||H|\dmv
  -\frac{e^{\Gamma r^\a}}{r^2}\int\gamma\left(\frac{\rho}{r}\right)\chi |Y||H|\dmv\\
  &  -\frac{e^{\Gamma r^\a}}{r^2}\int\gamma\left(\frac{\r}{r}\right)|\nabla\chi|\left(
  \rho|D\r|+|Y|\right)\dmv
  +\frac{1}{r}\int\chi
\frac{\partial}{\partial r}
\left(\gamma\left(\frac{\rho}{r}\right)\right)|\nabla^\bot\rho|^2\dmv\notag.
\end{align}

Letting $\gamma$ in \eqref{gamma inequality II}, increase to the characteristic function of $(-\infty,1)$ and
integrating  from $\sigma$ to $r$, where $0<\sigma<r$, we
obtain the following \textbf{monotonicity inequality for tubular neighborhoods of $B$}.
\begin{align}
  \label{monotonicity inequality}
  \frac{e^{\Gamma \s^\a}}{\sigma}\int_{T_\sigma}\chi\dmv\le&  \frac{e^{\Gamma r^\a}}
  {r}\int_{T_r}\chi\dmv\notag\\
  &+e^{\Gamma r^\a}\int_{T_r}\chi |D\r||H|\dmv+e^{\Gamma r^\a}\int_{T_r}\chi \frac{1}{\r}|Y||H|\dmv\\
&+e^{\Gamma r^\a}\int_{T_r}|D\chi|\left(|D\r|+\frac{1}{\r}|Y| \right)\dmv
  -\int_{T_r
  \setminus T_\sigma}\chi\frac{1}{\rho}|\nabla^\bot\rho|^2\dmv.\notag
\end{align}

Note that the last term on the RHS of \eqref{monotonicity inequality} is negative, and can therefore be dropped.
 The other terms are bounded because of the estimate for $|Y|$ and $|D\r|$ (see \eqref{YC}, \eqref{Drho}), the fact that $H$ is in $L^p$ (see \eqref{mc}) and the fact that $\chi$ can be chosen so that $|D\chi|$ is bounded, with the bound depending only on $W$ (recall that $\chi=1$ on $W$). Hence letting $\sigma\rightarrow 0$ yields
 \begin{equation}
  \label{bdry density}
  \lim_{h\rightarrow 0}\frac{1}{h}\int_{T_h}\chi\dmv\le c,
\end{equation}
 where the constant $c$ depends
on $ W$.
This proves \eqref{bounded variation equation}, i.e. that $V$ is of bounded variation or equivalently that $\|\d V\|$ is a Radon
measure. 
This implies that for any $C^1$ vector field $X$ with compact support in $B_R(0)$ we have that
$$\d V(X)=\int_{B_R(0)\setminus B}X\cdot H \dmv+\int_B X\cdot \eta \,d\Vsing,$$
where
\begin{equation}
\label{bdry variation as limit}
\int_B X\cdot \eta \,d\Vsing\le\lim_{h\rightarrow 0}\frac1h\int_{T_h}|X\cdot\nabla\r|\dmv
\end{equation}
(see \eqref{split 1st variation formula into 3 parts}, \eqref{bdry bounded variation of vector field}).

Finally we want to prove that $\eta=\eta(y)\in N_yB$, for all $y \in B$. We will do this by showing that the RHS of the inequality \eqref{bdry variation as limit} vanishes for all continuous  vector fields $X$ such that $X(x)\in T_x B$ for all $x\in B$. Note  that, by approximation, \eqref{bdry variation as limit} holds, not only for $C^1$, but also for  continuous vector fields $X$.

Let $Z_B$ be a continuous vector
field on $B$ with compact support in $B\cap B_R(0)$ and such that $Z_B(x)\in T_{{x}}B$ and $|Z_B(x)|\le 1$ for all $x\in B$. We will show that we can appropriately extend $Z_B$ in $B_R(0)$, so that when we apply it in 
\eqref{bdry variation as limit} the RHS vanishes.

\begin{claim}\label{ZZB} Assuming that $\k R^\a<\frac14$, there exists an extension $Z$ of $Z_B$ in $B_R(0)$ with the following properties. $Z$ is a continuous  vector field with compact support in $B_R(0)$, $|Z|\le 1$ and, for $h$ small enough (depending only on the support of $Z_B$),
\[\left|Z(x)\cdot \sum_{\C\in\mathcal W}\phi_\C(x) X_\C(x)\right|\le c\k\r(x)^{1+\a} \text{   in   } T_h\cap B_R(0),\]
where $c$ is an absolute constant and  $X_\C$ is as in \eqref{rhoCDrhoC}.
\end{claim}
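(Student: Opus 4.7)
My plan is to define $Z$ by using the Whitney partition $\{\phi_\C\}$ to blend together the values of $Z_B$ at the base points $y_\C$. Precisely, fix a smooth cutoff $\eta\colon\R^{n+k}\to[0,1]$ with compact support in $B_R(0)$ and $\eta\equiv 1$ on a neighborhood of $\spt Z_B$, and set
\[
Z(x):=\eta(x)\sum_{\C\in\mathcal W}\phi_\C(x)\,Z_B(y_\C)\text{ for }x\in B_R(0)\setminus B,\qquad Z(y):=Z_B(y)\text{ for }y\in B,
\]
with the convention $Z_B(y_\C):=0$ whenever $y_\C\notin\spt Z_B$. Then $|Z|\le 1$ follows at once from $\sum_\C\phi_\C\equiv 1$ and $|Z_B|,|\eta|\le 1$; the support is compact in $B_R(0)$ thanks to $\eta$; and the sum is locally finite, so $Z$ is smooth off $B$. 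For continuity at a point $y\in B$, I will use the Whitney estimate $|x-y_\C|\le c\rho_0(x)$, valid whenever $\phi_\C(x)>0$, which forces every active $y_\C$ to tend to $y$ as $x\to y$; continuity of $Z_B$ together with $\sum_\C\phi_\C\equiv 1$ then gives $Z(x)\to Z_B(y)=Z(y)$.

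The heart of the argument will be the estimate on $Z\cdot\sum_\C\phi_\C X_\C$. Expanding and using the key fact that $Z_B(y_{\C'})\in T_{y_{\C'}}B$, so that $\proj_{N_{y_{\C'}}B}Z_B(y_{\C'})=0$, one can rewrite
\[
Z(x)\cdot\sum_\C\phi_\C(x)X_\C(x)
=\eta(x)\sum_{\C,\C'}\phi_\C(x)\phi_{\C'}(x)\,\bigl(\proj_{N_{y_\C}B}-\proj_{N_{y_{\C'}}B}\bigr)Z_B(y_{\C'})\cdot(x-y_\C).
\]
By \eqref{bdry smoothness} the operator $\proj_{N_{y_\C}B}-\proj_{N_{y_{\C'}}B}$ has norm at most $\k|y_\C-y_{\C'}|^\a$, while for cubes with $\phi_\C(x)\phi_{\C'}(x)>0$ the Whitney bounds together with \eqref{rrhot} give $|y_\C-y_{\C'}|,|x-y_\C|\le c\rho(x)$. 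The bounded-overlap property of $\mathcal W$ then caps the number of non-zero pairs by a dimensional constant, yielding the desired bound $c\k\rho(x)^{1+\a}$.

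The main obstacle I anticipate is making sure that \eqref{bdry smoothness} is genuinely applicable to every contributing pair $(\C,\C')$, i.e.\ that $y_\C,y_{\C'}\in B_{4R}(0)$. This is precisely why the conclusion is formulated only on $T_h\cap B_R(0)$ for $h$ small depending on $\spt Z_B$, and why the hypothesis $\k R^\a<\tfrac14$ is invoked: through \eqref{rrhot} it keeps $\rho_0$ comparable to $\rho$ and prevents the $y_\C$ attached to relevant cubes from drifting outside $B_{4R}(0)$. Once this location check is verified, the rest of the proof reduces to the routine two-index sum sketched above.
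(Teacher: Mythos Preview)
Your argument is correct, and it is genuinely different from the paper's. The paper constructs the extension via the \emph{graph projection} $\omega$ of Definition~\ref{ozc}: it sets $Z(x)=Z_B(\omega(x))\zeta(\rho(x))$ with $\zeta$ a cutoff in $\rho$, and then estimates $|Z(x)\cdot(x-\bar x)|$ using that $Z(x)\in T_{\omega(x)}B$ while $x-\bar x\in N_{\bar x}B$, together with the bound $|\bar x-\omega(x)|\le c\rho_0(x)$ (inequality~\eqref{c0}, whose proof is where the hypothesis $\kappa R^\alpha<\tfrac14$ is actually consumed). The passage from $x-\bar x$ to $\sum_\C\phi_\C X_\C$ then goes through Remark~\ref{prelimlem} and \eqref{rcr}. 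By contrast, you stay entirely inside the Whitney machinery: your $Z$ is a $\phi_\C$-average of the boundary values $Z_B(y_\C)$, and the double sum collapses directly because $Z_B(y_{\C'})\perp N_{y_{\C'}}B$ kills the diagonal, leaving only the $(\proj_{N_{y_\C}B}-\proj_{N_{y_{\C'}}B})$ difference controlled by \eqref{bdry smoothness}. Your route avoids both $\omega$ and the auxiliary estimate~\eqref{c0}; it is arguably more in keeping with the Whitney philosophy of the section, and the hypothesis $\kappa R^\alpha<\tfrac14$ enters only mildly through \eqref{rrhot} when converting $\rho_0$ to $\rho$. One small simplification: you do not actually need the bounded-overlap count, since $\sum_\C\phi_\C=\sum_{\C'}\phi_{\C'}=1$ already turns the double sum of the pointwise bounds into $c\kappa\rho_0(x)^{1+\alpha}$.
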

We provide the proof of the claim at the end of the proof of the lemma.
Let $Z$ be as in Claim ~\ref{ZZB} and note also that, by approximation, \eqref{bdry variation as limit} holds, not only for $C^1$, but also for  continuous vector fields, and thus for this $Z$. We now estimate $|Z\cdot \nabla\r|$ as follows.
\begin{equation*}\begin{split}
|Z\cdot \nabla\r|^2= |Z\cdot (D\r-\nabla^\perp\r)|^2\le 2 |Z\cdot D\r|^2+2 |Z\cdot \nabla^\perp\r|^2
\end{split}
\end{equation*}
and 
\begin{equation*}\begin{split}
 |Z\cdot D\r|^2\le 
2\left|Z\cdot \left(D\r-\frac{1}{\rho}\sum_{\C\in\mathcal W}\phi_\C X_C\right)\right|^2+2\left|Z\cdot\frac{1}{\r}\sum_{\C\in\mathcal W}\phi_\C X_C\right|^2\le
c\k^2\rho^{2\a},
\end{split}
\end{equation*}
where we have used Claim ~\ref{ZZB} and the estimate \eqref{rhoDrho}. Hence
\begin{equation*}
|Z\cdot \nabla\r|^2\le
c\k^2\rho^{2\a}+2|\nabla^\bot\rho|^2
\end{equation*}
and so
\begin{equation*}
\frac1h\int_{T_h}|Z\cdot\nabla\r|\dmv \le \left(\frac{1}{h}\int_{T_h\cap\spt
Z}\dmv\right)^\frac 12 \left(2\int_{T_h\cap\spt
Z}\frac{|\nabla^\bot\r|^2}{\r}\dmv+\frac{c\k^2}{h}\int_{T_h\cap\spt Z}\r^{2\a}\dmv\right)^\frac12.
\end{equation*}
Using the monotonicity inequality \eqref{monotonicity inequality} with $\s\to0$ and the fact that $V$ is of bounded variation (in particular inequality \eqref{bdry density}) we have that for any $\chi$
with compact support
$$\int_{T_r}\chi\frac{|\nabla^\bot\r|^2}{\r}\dmv<\infty.$$
Hence,
\begin{equation*}
  \lim_{h\rightarrow 0}\frac{1}{h}\int_{T_h}|Z\cdot\nabla\r|\dmv=0.
\end{equation*}
Because of \eqref{bdry variation as limit}, this implies that $Z(y)\cdot\eta(y)=Z_B(y)\cdot\eta(y)=0$ for all $y\in B$ and so
$\eta(y)\in N_yB$ for all $y \in B$.

Finally, we give the proof of Claim ~\ref{ZZB}.

\noindent{\it Proof of Claim ~\ref{ZZB}.} By the assumption on $R$, $B\cap B_{4R}(0)$ is a graph of a $C^{1,\a}$ function over $T_0B$, the tangent space of $B$ at $0$. For any $x\in B_R(0)$, let $\omega(x)$ be the unique point in $B_{2R}(0)\cap B$ such that $x-\o(x)\in N_0B$ (cf. Definition \ref{ozc}). We define $Z$ by
\[Z(x)= Z_B(\omega(x))\zeta(\r(x)),\]
where $\zeta:\R\to[0,1]$ is a smooth function such that $\zeta(t)=1$ for $t\le h_0$ and $\zeta(t)=0$ for $t\ge 2h_0$, for some constant $h_0>0$. Then $Z$ is continuous,  satisfies $|Z|\le 1$ and has compact support in $B_R(0)$, provided that $h_0$ is small enough (depending on the support of $Z_B$). Furthermore, we have that
\[\left|Z\cdot\sum_{\C\in\mathcal W}\phi_\C X_\C\right|\le c\k \r^{1+\a}.\]
To show this last estimate, because of \eqref{rcr} and \eqref{rrhot}, it suffices to prove that
\[\left|Z(x)\cdot (x-\bar x)\right|\le c\k \r^{1+\a}.\]
Note that
\[\begin{split}\left|Z(x)\cdot (x-\bar x)\right|=&\left|\proj_{T_{\o(x)}B} Z(x)\cdot\proj_{N_{\bar x} B}(x-\bar x)\right|\\
&= \left|\proj_{T_{\o(x)}B} Z(x)\cdot\left(\proj_{N_{\bar x} B}(x-\bar x)-\proj_ {N_{\o(x)} B}(x-\bar x)\right)\right|\\
&\le\|\proj_{N_{\bar x} B}-\proj_ {N_{\o(x)} B}\|x-\bar x|\le c\k\r(x)|\bar x-\o(x)|^\a\\
&\le c\k\r^{1+\a}(x),
\end{split}\]
where we have used \eqref{bdry smoothness}, \eqref{rrhot} and the fact that 
\begin{equation}\label{c0}
|\bar x-\o(x)|\le |\bar x-x|+|x-\o(x)|\le (c_0+1)|\bar x-x|=(c_0+1)\r_0(x),
\end{equation}
for some constant $c_0=1+c\k R^\a$ (and thus it satisfies  $c_0\stackrel{\k\to 0}{\longrightarrow}1$).
 To see that \eqref{c0} holds, let 
\[\frac{|x-\o(x)|}{|\bar x-x|}=c_0\ge 1. \]
Using the notation $T=T_0B$, we then have
\[\begin{split}\frac{|\proj_{T^\perp}(\o(x)-\bar x)|}{|\proj_{T}(\o(x)-\bar x)|}\ge&\frac{|\proj_{T^\perp}(\o(x)-x)|-|\proj_{T^\perp}(x-\bar x)|}{|\proj_{T}(\o(x)- x)|+|\proj_{T}(x-\bar x)|}\\
\ge& \frac{|\o(x)-x|-|x-\bar x|}{|\proj_{T}(x-\bar x)|}\ge  \frac{(c_0-1)|x-\bar x|}{|x-\bar x|}=c_0-1,
\end{split}\]
and thus
\[|\proj_{T^\perp}(\o(x)-\bar x)|\ge (c_0-1)(|\o(x)-\bar x|-|\proj_{T^\perp}(\o(x)-\bar x)|).\]

Therefore, using again \eqref{bdry smoothness}, we obtain
\[\begin{split}
\frac{c_0-1}{c_0}|\o(x)-\bar x|&\le |\proj_{T^\perp}(\o(x)-\bar x)| \\
&\le|\proj_{N_{\o(x)}}(\o(x)-\bar x)|+|(\proj_{N_{\o(x)}}-\proj_{T^\perp})(\o(x)-\bar x)|\\
&\le \k|\o(x)-\bar x|^{1+\a}+\k|\o(x)|^\a|\o(x)-\bar x|\le 2\k R^\a|\o(x)-\bar x|
\end{split}\]
Since $\k R^\a<\frac14$, we have  $c_0\le \frac{1}{1-2\k R^\a}<1+4\k R^\a$, which shows the required estimate.
\end{proof}
\subsection*{Monotonicity Formulae}

We would like to derive now a monotonicity formula for the ratio
\[r^{-k}m(r):=\frac{1}{r^k}\mv\left(B_r(b)\right),\]
where $b$ is a given point on $B\cap B_R(0)$ and $r>0$ is such that $B_r(b)\subset B_R(0)$. Similar estimates can be found for example in \cite{al2, BROTH, DuzaarSteffen-bdryregularity, DuzaarSteffen-lambda, LSRHbdryregularity}.

Having established the first variation formula, Lemma
~\ref{first variation formula}, we can use it with the vector field
$$X(x)=\phi(d(x))(x-b),$$
where $d(x)=|x-b|$ and $\phi:\R\rightarrow\R$ is a smooth
function, so that $\phi\circ d$ has compact support in $B_{R-|b|}(b)$. We can then argue as in the interior case, i.e. when $B_r(b)\cap B=\emptyset$, see \cite[\S 17]{LSgmt}, keeping track now of the extra boundary term. Thus, by letting $\phi$ approach the characteristic function of
$(-\infty, r)$, where $0<r<R/4-|b|$, we obtain the following
\textbf{monotonicity identity}.
\begin{equation}\label{monotonicity identity}
\begin{split}
\frac{d}{dr}\left(r^{-k}\mv(B_r(b))\right)=&\frac{d}{dr}\int_{B_r(b)}\frac{|\nabla^\bot
d|^2}{d^k}\dmv +r^{-k-1}\int_{B_r(b)}(x-b)\cdot H\dmv
\\&+r^{-k-1}\int_{B\cap B_r(b)}(x-b)\cdot\eta(x) \,d\Vsing.
\end{split}
\end{equation}
Recall that $\eta(x)\in N_xB$ for all $x\in B\cap B_R(0)$ and so, using \eqref{bdry smoothness}, we can bound the last term on the RHS of \eqref{monotonicity identity} by 
\begin{equation*}
\left|\int_{B\cap B_r(b)}(x-b)\cdot\eta(x) d\Vsing\right|\le \k r^{1+\a} \Vsing(B_r(b)).
\end{equation*}

We now want to estimate $\Vsing(B_r(b))$. 
In the proof of the first variation formula we have shown that this singular measure is bounded by a limit of integration along tubular neighborhoods of $B$ (equation \eqref{bdry variation as limit} in the proof of Theorem~\ref{first variation formula}). Hence, using the monotonicity inequality for tubular neighborhoods (\eqref{monotonicity
inequality} in the proof of Theorem~\ref{first variation formula}) with $\chi$ approaching the characteristic function of $B_r(b)$ and
$\s\rightarrow 0$, and assuming that $\k R^\a$ is small enough (depending only on $\a$), so that the terms $e^{\Gamma r^\a}$, $|D\r|$ and  $|Y|$ appearing in the monotonicity inequality satisfy
$e^{\Gamma r^\a}\le 2$, $|D\r|+\r^{-1}|Y|\le 2$ (see \eqref{Gamma}, \eqref{Drho} and \eqref{YC}), we obtain the following estimate for the singular measure in terms of $m(r)$.
\begin{align}
  \label{bdry mass bound}
\Vsing(B_r(b))\le \frac{2}{r}m(r)+4\Lambda m(r)^\frac{1}{q}+4m'(r).
\end{align}
Using the notation
$$\bar{m}(r):=r^{-k}m(r),$$
we have that
\begin{equation}
  \label{bdry mass bound II}
 \Vsing(B_r(b))\le (4k+2)r^{k-1}\bar{m}(r)+4\Lambda r^\frac kq\bar{m}(r)^\frac{1}{q}+
  4r^k\bar{m}'(r).
\end{equation}

\begin{lemma}\label{p-monotonicity lemma}
Under the hypotheses of Theorem~\ref{first variation formula} and assuming that $c\k R^\a<1$, where $c$ is a constant that depends only on $\a$ (see Remark ~\ref{p-mono rmk}), the following holds. There exists a function $\Phi(r)$ and a constant $\Lambda_0=\Lambda_0(\Lambda, k,p,\a)$ such that for any $b\in B_R(0)\cap B$ 
\begin{equation*}\label{p-monotonicity formula}
e^{\Phi(r)}\bar{m}(r)^\frac1p +\Lambda_0 r^{1-\frac kp}
\end{equation*}
is an increasing function of $r$ for $r\in (0, R-|b|)$, where $\bar m(r)=r^{-k}m(r)=r^{-k}\mv(B_r(b))$.
In particular,
\[\Phi(r)=\frac{4k+2}{p\a}\k r^\a\,,\,\Lambda_0=\frac{\Lambda}{p-k}\exp\left(\frac{4k+2}{p\a}\right).\]
\end{lemma}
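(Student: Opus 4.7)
The plan is to combine the monotonicity identity \eqref{monotonicity identity}, the Hölder bound on the mean curvature term, and the singular boundary bound \eqref{bdry mass bound II} into a differential inequality for $\bar m(r)$, and then rewrite it as $\frac{d}{dr}\bigl[e^{\Phi(r)}\bar m(r)^{1/p}+\Lambda_0 r^{1-k/p}\bigr]\ge 0$ for suitable $\Phi,\Lambda_0$.

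\textbf{Step 1 (control of the three terms in \eqref{monotonicity identity}).} The $|\nabla^\perp d|^2/d^k$ term is a derivative of a monotone quantity and hence non-negative, so it can be discarded for a lower bound. For the mean-curvature term I use Hölder with exponents $p,q$:
\[
r^{-k-1}\Bigl|\int_{B_r(b)}(x-b)\!\cdot\! H\,\dmv\Bigr|\le r^{-k}\int_{B_r(b)}|H|\,\dmv\le \Lambda\, r^{-k/p}\bar m(r)^{1/q}.
\]
For the singular boundary term I use $\eta(x)\in N_xB$ together with \eqref{bdry smoothness} to get $|(x-b)\!\cdot\!\eta(x)|\le \k r^{1+\a}$, and then \eqref{bdry mass bound II} to estimate $\Vsing(B_r(b))$ by a combination of $\bar m(r)$, $\bar m(r)^{1/q}$ and $\bar m'(r)$.

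\textbf{Step 2 (differential inequality).} Combining, and using $-k+k/q=-k/p$ to collect powers of $r$, I obtain
\[
\bar m'(r)\ge -\Lambda r^{-k/p}\bar m(r)^{1/q}-(4k+2)\k r^{\a-1}\bar m(r)-4\k\Lambda r^{\a-k/p}\bar m(r)^{1/q}-4\k r^\a\bar m'(r).
\]
Moving the last term to the left and dividing by $1+4\k r^\a$, under the hypothesis $c\k R^\a<1$ (with $c$ depending on $\a$) I can absorb the small factors and conclude
\[
\bar m'(r)\ge -\Lambda r^{-k/p}\bar m(r)^{1/q}-(4k+2)\k r^{\a-1}\bar m(r).
\]

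\textbf{Step 3 (choosing $\Phi$ and $\Lambda_0$).} Using $1/p-1=-1/q$, differentiate $\bar m^{1/p}$:
\[
\tfrac{d}{dr}\bar m(r)^{1/p}=\tfrac{1}{p}\bar m(r)^{-1/q}\bar m'(r)\ge -\tfrac{\Lambda}{p}r^{-k/p}-\tfrac{(4k+2)\k}{p}r^{\a-1}\bar m(r)^{1/p}.
\]
Multiplying by $e^{\Phi(r)}$ and choosing $\Phi'(r)=\tfrac{(4k+2)\k}{p}r^{\a-1}$, i.e.\ $\Phi(r)=\tfrac{4k+2}{p\a}\k r^\a$, the first-order linear term cancels and I arrive at
\[
\tfrac{d}{dr}\bigl(e^{\Phi(r)}\bar m(r)^{1/p}\bigr)\ge -\tfrac{\Lambda}{p}\,e^{\Phi(r)}r^{-k/p}.
\]
Since $\Phi(r)\le \tfrac{4k+2}{p\a}$ for $r\le R$ (using $\k R^\a\le 1$), setting $\Lambda_0=\tfrac{\Lambda}{p-k}\exp\!\bigl(\tfrac{4k+2}{p\a}\bigr)$ gives $\Lambda_0(1-k/p)r^{-k/p}\ge \tfrac{\Lambda}{p}e^{\Phi(r)}r^{-k/p}$, so
\[
\tfrac{d}{dr}\Bigl(e^{\Phi(r)}\bar m(r)^{1/p}+\Lambda_0 r^{1-k/p}\Bigr)\ge 0,
\]
as desired.

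\textbf{Main obstacle.} The delicate step is the clean absorption in Step 2: both the coefficient $4\k r^\a$ in front of $\bar m'(r)$ and the $e^{\Phi(r)}$ factor arising in Step 3 must be dominated uniformly on $(0,R)$, which is exactly where the smallness hypothesis $c\k R^\a<1$ is consumed, with $c$ ultimately depending on $\a$ (through the exponent $\a$ in $\Phi$) and on $k,p$. Tracking constants to produce the stated explicit form of $\Phi$ and $\Lambda_0$ is routine once the above outline is in place.
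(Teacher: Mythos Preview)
Your proposal is correct and follows essentially the same route as the paper's proof: both combine the monotonicity identity \eqref{monotonicity identity} with the H\"older bound on the mean-curvature term and the singular estimate \eqref{bdry mass bound II}, absorb the $4\k r^\a\bar m'(r)$ term using $\k R^\a<1$, then multiply by $\bar m(r)^{-1/q}$ and by $e^{\Phi(r)}$ with $\Phi(r)=\tfrac{4k+2}{p\a}\k r^\a$ to obtain the claimed derivative inequality. The only cosmetic difference is that the paper retains the non-negative $\tfrac{d}{dr}\int |\nabla^\perp d|^2/d^k$ term (with a factor $1/5$) before discarding it, whereas you drop it at the outset; this has no effect on the argument.
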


\begin{remark} \label{p-mono rmk} In Lemma ~\ref{p-monotonicity lemma}, we specifically require that $\k, R$ are such that Theorem~\ref{first variation formula} holds. We further assume that $\k R^\a<1$ and $\k R^\a$ is small enough (depending only on $\a$), so that the terms $e^{\Gamma r^\a}$, $|D\r|$ and  $|Y|$ appearing in the tubular monotonicity inequality (\eqref{monotonicity
inequality} in the proof of Theorem~\ref{first variation formula})  satisfy
$e^{\Gamma r^\a}\le 2$, $|D\r|+\r^{-1}|Y|\le 2$ (see \eqref{Gamma},  \eqref{Drho} and \eqref{YC}), so that
the estimates \eqref{bdry mass bound} and \eqref{bdry mass bound II} hold.
\end{remark}

\begin{proof}
By using the estimate for the singular measure \eqref{bdry mass bound II} in the monotonicity identity \eqref{monotonicity identity}, we obtain the following.
\begin{equation*}
\begin{split}
 \bar{m}'(r)\ge\frac{d}{dr}\int_{B_r(b)}&\frac{|\nabla^\bot
d|^2}{d^k}\dmv-(1+4\k r^\a )\Lambda r^{-\frac kp}\bar{m}(r)^\frac1q\\
  &-(4k+2)\k r^{\a-1}\bar{m}(r)-
  4\k r^\a\bar{m}'(r).
\end{split}
\end{equation*}
Since $\k r^\a<1$, we obtain
\begin{align*}
\bar{m}'(r)+\Lambda  r^{-\frac kp}\bar{m}(r)^\frac1q+(4k+2)\k r^{\a-1}\bar{m}(r)\ge\frac15\frac{d}{dr}\int_{B_r(b)}\frac{|\nabla^\bot
d|^2}{d^k}\dmv\ge 0.
\end{align*}
Multiplying by $\bar{m}(r)^{-\frac{1}{q}}$, we obtain
\begin{equation*}
\bar{m}'(r)\bar{m}(r)^{-\frac{1}{q}}+(4k+2)\k r^{\a-1}\bar{m}(r)^\frac{1}{p}\ge -\Lambda r^{-\frac {k}{p}}.
\end{equation*}
Let
\begin{equation*}\label{Phi of r}
\Phi(r)=\frac{4k+2}{p\a}\k r^\a.
\end{equation*}
By multiplying the above inequality by $p^{-1}e^{\Phi(r)}$, we then find 
\begin{equation*}
\left(e^{\Phi(r)}\bar{m}(r)^{\frac1p}\right)'\ge-\frac {\Lambda}{p} e^{\Phi(r)}r^{-\frac kp}\ge-\frac {\Lambda}{p}\exp\left(\frac{4k+2}{p\a}\right)r^{-\frac kp}.
\end{equation*}
Finally, letting
\begin{equation*}\label{Lambda}
\Lambda_0=\frac{\Lambda}{p-k}\exp\left(\frac{4k+2}{pa}\right),
\end{equation*}
we obtain
\begin{equation*}
\left(e^{\Phi(r)}\bar{m}(r)^{\frac1p}+\Lambda_0 r^{1-\frac kp}\right)'\ge0,
\end{equation*}
which proves the lemma.
\end{proof}
A consequence of Lemma 
~\ref{p-monotonicity lemma} is the following result about the density at points on $B$. 

\begin{corollary}[cf.  Section 3.5 in \cite{al2}]\label{density is upper semi continuous}
The density function
\begin{equation*}
\Theta_V(b)=\lim_{r\downarrow 0}\o_k^{-1}\bar{m}(r)
\end{equation*}
satisfies the following.
\begin{itemize}
\item[(1)] 
It is a real valued  upper semicontinuous function on $B\cap B_R(0)$.
\item[(2)] For all $b\in B\cap B_R(0)\cap \spt V$, $\Theta_V(b)\ge\frac 12$.
\item[(3)] There is a number $\mu>1$ with the property that if $b\in B\cap \spt V$ and $2\Theta_V(b)<\mu$, then there is a positive $r>0$ such that $B\cap B_r(b)\subset\spt V$.
\end{itemize}
\end{corollary}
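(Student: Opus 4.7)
The plan is to derive all three assertions from the $p$-monotonicity of Lemma~\ref{p-monotonicity lemma} combined with a tangent-cone-and-reflection analysis at boundary points. Throughout, set $F_b(r):=e^{\Phi(r)}\bar m(r)^{1/p}+\Lambda_0 r^{1-k/p}$, which by Lemma~\ref{p-monotonicity lemma} is nondecreasing on $(0,R-|b|)$.

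For (1), since $\Phi(r)\to 0$ and $r^{1-k/p}\to 0$ as $r\downarrow 0$ (using $p>k$), monotonicity guarantees the existence of the finite limit defining $\Theta_V(b)$, so $\Theta_V$ is a well-defined real-valued function on $B\cap B_R(0)$. For upper semicontinuity, take $b_n\to b$ in $B\cap B_R(0)$ and a radius $r$ outside the countable set on which $\mv(\partial B_r(b))>0$; since $\mv$ is Radon, $\mv(B_r(b_n))\to\mv(B_r(b))$, hence $F_{b_n}(r)\to F_b(r)$. The monotonicity applied at each $b_n$ yields $(\omega_k\Theta_V(b_n))^{1/p}\le F_{b_n}(r)$; letting first $n\to\infty$ and then $r\downarrow 0$ gives $\limsup_n \Theta_V(b_n)\le \Theta_V(b)$.

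For (2), I would blow up at $b\in B\cap B_R(0)\cap\spt V$: consider the rescaled varifolds $V_{b,r_j}:=(\eta_{b,r_j})_\# V$, $\eta_{b,r}(x)=(x-b)/r$, for $r_j\downarrow 0$. These have locally uniformly bounded mass by Lemma~\ref{p-monotonicity lemma} and first variations controlled by \eqref{1st variation equation}, \eqref{mc} and \eqref{bdry mass bound}, so by Allard's varifold compactness a subsequence converges to a limit $V^*$ that is stationary in $\R^{n+k}\setminus P$, with $P:=T_bB$, and carries a singular boundary weight by a unit vector $\eta^*\in P^\perp$; monotonicity at the origin forces $V^*$ to be a cone with $\Theta(V^*,0)=\Theta_V(b)$. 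Letting $\sigma$ denote the orthogonal reflection of $\R^{n+k}$ across $P$, the boundary weight of $\sigma_\# V^*$ cancels that of $V^*$, so $\tilde V^*:=V^*+\sigma_\# V^*$ is globally stationary in $\R^{n+k}$ with $\Theta(\tilde V^*,0)=2\Theta_V(b)$. Since $0\in\spt\tilde V^*$, Allard's interior monotonicity combined with the multiplicity bound $\theta\ge 1$ gives $\Theta(\tilde V^*,0)\ge 1$, hence $\Theta_V(b)\ge 1/2$.

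For (3), I would combine the reflection of (2) with Allard's interior gap theorem from \cite{al1}: there exists $\epsilon_0>0$ such that any stationary varifold in $\R^{n+k}$ with density in $[1,1+\epsilon_0)$ at a point of its support agrees, in a neighborhood of that point, with a multiplicity-one $k$-plane. Set $\mu:=1+\epsilon_0$. For $b\in B\cap\spt V$ with $2\Theta_V(b)<\mu$, every subsequential tangent cone $V^*$ built as in (2) satisfies $\Theta(\tilde V^*,0)\in[1,1+\epsilon_0)$, so $\tilde V^*$ is a multiplicity-one $k$-plane $\Pi$ near $0$; reflection-invariance of $\tilde V^*$ forces $P\subset\Pi$, so $V^*$ is the half-$k$-plane bounded by $P$ and $P\subset\spt V^*$ near $0$. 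To conclude $B\cap B_r(b)\subset\spt V$ for some $r>0$, I would argue by contradiction: a sequence $b_m'\in B\setminus\spt V$ with $r_m:=|b_m'-b|\to 0$ gives, along a subsequence, $V_{b,r_m}\to V^*$ as above and $(b_m'-b)/r_m\to v\in P\subset\spt V^*$, while the empty balls $B_{\dist(b_m',\spt V)}(b_m')\cap\spt V=\emptyset$ produce, after rescaling, a neighborhood of $v$ disjoint from $\spt V_{b,r_m}$, contradicting $v\in\spt V^*$ by lower semicontinuity of support under weak convergence. The main obstacle will be the degenerate case $\dist(b_m',\spt V)/r_m\to 0$, in which the gap collapses in the limit and one has to perform a second blow-up at the gap point, along the lines of Allard's argument in \cite[Section 3]{al2}.
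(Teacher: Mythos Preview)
Your plan follows Allard's argument in \cite[Section~3.5]{al2} closely, which is precisely what the paper does: its proof is a one-line reference to that section, with the monotonicity formula and \eqref{bdry smoothness} substituted for Allard's 3.4(2) and 2.2(4)(a). So the overall route is the same.

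There is, however, one genuine gap in your treatment of (2). You write ``since $0\in\spt\tilde V^*$'', but this does not follow from $b\in\spt V$ alone: varifold convergence does not preserve support membership without a uniform lower mass bound, and if $\Theta_V(b)=0$ then every tangent cone $V^*$ is the zero varifold, so $\tilde V^*$ has empty support and the inequality $\Theta(\tilde V^*,0)\ge 1$ is vacuous. In Allard's scheme one first secures a crude lower bound $\Theta_V(b)>0$ (his 3.4(1)) by combining interior and boundary monotonicity: for $x\in\spt V\setminus B$ near $b$ with $\rho=\rho_0(x)$ and $\bar x\in B$ a nearest point, interior monotonicity gives $\mu_V(B_\rho(x))\ge (1-o(1))\omega_k\rho^k$; since $B_\rho(x)\subset B_{2\rho}(\bar x)$ this yields $\bar m_{\bar x}(2\rho)\ge (1-o(1))2^{-k}\omega_k$, and feeding this into your $F_{\bar x}$ and letting $\bar x\to b$ along such nearest points produces $\Theta_V(b)\ge 2^{-k}>0$. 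Only with this in hand does the reflection argument become non-vacuous and sharpen the bound to $\tfrac12$. Your sketch omits this step. Parts (1) and (3) are fine as outlined; the ``second blow-up'' you flag in (3) is exactly the subtlety Allard resolves there.
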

\begin{proof}
The proof is identical to that of the theorem in \cite[Section 3.5]{al2}, using here Lemma ~\ref{monotonicity lemma} and \eqref{bdry smoothness}, 
instead of 3.4(2) and 2.2(4)(a) of \cite{al2}, respectively. 
\end{proof}

\begin{corollary}\label{lab}
Under the hypotheses of Lemma ~\ref{p-monotonicity lemma}, we have 
\[m(r)\ge\l r^k,\]
where $\l=\frac {1}{2^{p}} \exp\left(-\frac{4k+2}{\a}\right)\frac{\o_k}{2}$.
\end{corollary}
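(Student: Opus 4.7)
The plan is to invoke the monotonicity of Lemma~\ref{p-monotonicity lemma} and pass to the limit $r\to 0^+$. Setting
\[ f(r) = e^{\Phi(r)}\bar{m}(r)^{1/p} + \Lambda_0\, r^{1-k/p},\]
the lemma asserts that $f$ is nondecreasing on $(0, R-|b|)$. Since $p>k$, the term $\Lambda_0 r^{1-k/p}\to 0$ as $r\to 0^+$, and because $\Phi(r)=\frac{4k+2}{p\alpha}\kappa r^\alpha$, also $e^{\Phi(r)}\to 1$. The needed base value of the density is supplied by Corollary~\ref{density is upper semi continuous}(2): for $b\in B\cap\spt V$ (which includes $b=0$ by \eqref{P1}) we have $\Theta_V(b)\geq \tfrac12$, i.e.\ $\liminf_{r\to 0^+}\bar{m}(r)\geq \omega_k/2$. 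Combining these two observations gives $\liminf_{r\to 0^+} f(r)\geq (\omega_k/2)^{1/p}$, and monotonicity then propagates the inequality to every admissible $r$:
\[ e^{\Phi(r)}\bar{m}(r)^{1/p} \;\geq\; (\omega_k/2)^{1/p} - \Lambda_0\, r^{1-k/p}.\]

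The only (mild) obstacle is that the right-hand side could dip below zero or yield a worse constant when $r$ is large. This is handled by restricting to the regime where $\Lambda_0 r^{1-k/p}\leq \tfrac12(\omega_k/2)^{1/p}$, which can be arranged to cover the whole range $(0,R-|b|)$ by the rescaling device of Remark~\ref{rescale} (shrinking $\Lambda$ at the cost of a harmless rescaling of $R$ and $\kappa$). On this range one obtains $\bar{m}(r)^{1/p}\geq \tfrac12 e^{-\Phi(r)}(\omega_k/2)^{1/p}$, so raising to the $p$-th power,
\[ \bar{m}(r) \;\geq\; \frac{1}{2^p}\, e^{-p\Phi(r)}\, \frac{\omega_k}{2}.\]
Finally, the hypothesis $\kappa R^\alpha<1$ of Lemma~\ref{p-monotonicity lemma} forces $p\Phi(r)=\frac{4k+2}{\alpha}\kappa r^\alpha\leq \frac{4k+2}{\alpha}$, producing $\bar m(r)\geq \lambda$ with $\lambda = \frac{1}{2^{p}}\exp(-\tfrac{4k+2}{\alpha})\tfrac{\omega_k}{2}$ exactly as advertised, i.e.\ $m(r)\geq \lambda r^k$.
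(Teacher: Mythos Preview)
Your proof is correct and follows essentially the same approach as the paper: pass to the limit $\sigma\to 0$ in the monotonicity formula of Lemma~\ref{p-monotonicity lemma}, use the density lower bound $\Theta_V(b)\ge\tfrac12$ from Corollary~\ref{density is upper semi continuous}, and bound $e^{-p\Phi(r)}$ via $\kappa R^\alpha<1$. You are in fact slightly more explicit than the paper in handling the term $\Lambda_0 r^{1-k/p}$ (the paper silently assumes it is at most $\tfrac12(\omega_k/2)^{1/p}$; you justify this via the rescaling remark), but the argument is otherwise identical.
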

\begin{proof}
 By letting $\s\downarrow 0$ in the monotonicity formula of Lemma ~\ref{p-monotonicity lemma} and using the lower bound for the density at a boundary point (Corollary ~\ref{density is upper semi continuous}), we obtain  the following lower bound for $\bar{m}(r)$
\begin{equation*}
\bar{m}(r)^\frac 1p\ge\frac 12 \exp\left(-\frac{4k+2}{p\a}\right)\left(\frac{\o_k}{2}\right)^\frac 1p,
\end{equation*}
which implies the result.
\end{proof}

Finally, we want to prove one more monotonicity lemma.
\begin{lemma}\label{monotonicity lemma}
Under the hypotheses of Lemma ~\ref{p-monotonicity lemma} and assuming  that $c\k R^\a<1$ and $c\Lambda r^{1-\frac kp}<1$, where $c$ is a constant that depends only on $\a, p$ and $k$ (see Remark ~\ref{mono rmk}), the following holds. There exists a function $\Psi(r)$ such that for any $b\in B_R(0)\cap B\cap\spt V$ 
\begin{equation*}\label{first monotonicity equation}
e^{\Psi(\s)}\bar{m}(\s)\le e^{\Psi(r)}\bar{m}(r)- \frac12\int_{B_r(b)\setminus B_\s(b)}
  \frac{|\nabla^\bot d|^2}{d^k}\dmv
\end{equation*}
and
\begin{equation*}\label{second monotonicity equation}
  e^{-\Psi(\s)}\bar{m}(\s)\ge e^{-\Psi(r)}\bar{m}(r)-2\int_{B_r(b)\setminus B_\s(b)}
  \frac{|\nabla^\bot d|^2}{d^k}\dmv
\end{equation*}
 for all $0<\s<r<R-|b|$, where $\bar m(r)=r^{-k}m(r)=r^{-k}\mv(B_r(b))$.
In particular,
\[\Psi(r)=4(2k+1)\left(a\l^{-\frac 1p}\left(1-\frac kp\right)^{-1}r^{1-\frac kp}+\a^{-1}\k r^\a\right),\]
where $\l=\frac {1}{2^{p}} \exp\left(-\frac{4k+2}{\a}\right)\frac{\o_k}{2}$.
\end{lemma}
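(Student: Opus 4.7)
\emph{Proof plan.} The plan is to differentiate the monotonicity identity \eqref{monotonicity identity} and rewrite it as a pair of one-sided differential inequalities for $\bar{m}(r)$, which integrate---via the integrating factors $e^{\pm\Psi(r)}$---into the two claimed inequalities. Throughout, write $F(r)=\int_{B_r(b)}|\nabla^\bot d|^2 d^{-k}\dmv$, so that $F$ is nondecreasing and $F'\ge 0$ as a Radon measure in $r$.

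First I would estimate the three terms on the RHS of \eqref{monotonicity identity}. By H\"older's inequality, $r^{-k-1}\bigl|\int_{B_r(b)}(x-b)\cdot H\dmv\bigr|\le\Lambda r^{-k/p}\bar{m}(r)^{1/q}$. For the singular boundary term, since $\eta(x)\in N_xB$ and $b\in B$, estimate \eqref{bdry smoothness} applied at $b$ gives $|(x-b)\cdot\eta(x)|=|\proj_{N_bB}(x-b)\cdot\eta(x)|\le\kappa|x-b|^{1+\alpha}\le\kappa r^{1+\alpha}$, so this term is bounded in absolute value by $\kappa r^{\alpha-k}\Vsing(B_r(b))$. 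Substituting the tubular estimate \eqref{bdry mass bound II} introduces a term $4\kappa r^\alpha\bar{m}'(r)$, which I move to the LHS, producing
\[(1\pm 4\kappa r^\alpha)\bar{m}'(r)\gtrless F'(r)\mp\Lambda(1+4\kappa r^\alpha)r^{-k/p}\bar{m}(r)^{1/q}\mp(4k+2)\kappa r^{\alpha-1}\bar{m}(r),\]
with the sign choices corresponding to a lower or upper bound for $\bar{m}'(r)$.

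To turn this into an ODE linear in $\bar{m}$, I invoke Corollary \ref{lab}: $\bar{m}(r)\ge\lambda$, whence $\bar{m}(r)^{1/q}=\bar{m}(r)\bar{m}(r)^{-1/p}\le\lambda^{-1/p}\bar{m}(r)$. The smallness hypotheses $c\kappa R^\alpha<1$ and $c\Lambda r^{1-k/p}<1$ then let me bound $(1\pm 4\kappa r^\alpha)$ away from $0$ and from $\infty$ and absorb the resulting constants into a single function $\Psi'$, yielding
\[\bar{m}'(r)+\Psi'(r)\bar{m}(r)\ge\tfrac12 F'(r)\quad\text{and}\quad\bar{m}'(r)-\Psi'(r)\bar{m}(r)\le 2F'(r),\]
with $\Psi'(r)$ of the form $C_1\Lambda\lambda^{-1/p}r^{-k/p}+C_2\kappa r^{\alpha-1}$ for universal constants $C_i=C_i(k)$; direct integration from $0$ recovers the stated formula for $\Psi(r)$.

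Finally, multiplying these two inequalities by $e^{\Psi}$ and $e^{-\Psi}$ respectively and using $F'\ge 0$ together with $\Psi\ge 0$ (so that $0<e^{-\Psi}\le 1\le e^{\Psi}$), I obtain $(e^{\Psi}\bar{m})'\ge\tfrac12 F'$ and $(e^{-\Psi}\bar{m})'\le 2F'$. Integrating from $\sigma$ to $r$ yields both claimed inequalities. The main obstacle is purely bookkeeping: one must track how the factors $(1\pm 4\kappa r^\alpha)$ and $(1+4\kappa r^\alpha)$ are absorbed into the various constants without degrading the leading coefficients $\tfrac12$ and $2$ in front of the tilt integral, and verify that a single choice of $\Psi(r)$---the larger of the two functions dictated by the two directions---works for both inequalities simultaneously.
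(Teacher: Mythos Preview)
Your proposal is correct and essentially identical to the paper's proof: both plug the tubular singular bound and the lower area-ratio bound $\bar m\ge\lambda$ from Corollary~\ref{lab} into the monotonicity identity \eqref{monotonicity identity} to obtain linear differential inequalities for $\bar m$, then integrate against $e^{\pm\Psi}$; the only cosmetic difference is that the paper linearizes the $H$-term \emph{before} re-deriving the singular bound (producing the variant \eqref{bdry mass bound III}), whereas you substitute \eqref{bdry mass bound II} first and linearize afterward. One minor slip to fix: since $\eta(x)\in N_xB$ (not $N_bB$), the correct identity is $(x-b)\cdot\eta(x)=\proj_{N_xB}(x-b)\cdot\eta(x)$ and \eqref{bdry smoothness} is applied at $x$ rather than at $b$---the resulting bound $\kappa|x-b|^{1+\alpha}$ is unchanged.
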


\begin{remark}\label{mono rmk} In Lemma \ref{monotonicity lemma}, we specifically require that $\k, R$ are such that both Theorem~\ref{first variation formula} and Lemma ~\ref{p-monotonicity lemma} hold. We further assume that $\k R^\a<\frac18$ and $\Lambda_0 R^{1-k/p}\le 1/2(\o_k/2)^{1/p}$, where $\Lambda_0$ is as in Lemma ~\ref{p-monotonicity lemma}, so that we are able to obtain a lower bound for $\bar m(r)$.
\end{remark}

\begin{proof}
By Corollary ~\ref{lab}, we have that
\[m(r)\ge\l r^k,\]
where $\l=\frac {1}{2^{p}} \exp\left(-\frac{4k+2}{\a}\right)\frac{\o_k}{2}$, and thus
\begin{equation}\label{H integral est}
\int_{B_r(b)}|H|\dmv\le \Lambda m(r)^{1-\frac 1p}\le \Lambda\l^{-\frac 1p}m(r) r^{-\frac kp}=\Lambda\l^{-\frac 1p}\bar{m}(r) r^{\frac kq}.
\end{equation}

We estimate the singular measure as we did in \eqref{bdry mass bound II}, using the monotonicity inequality for tubular neighborhoods (inequality \eqref{monotonicity inequality} in the proof of Theorem~\ref{first variation formula}), but now estimating the terms involving $H$ by \eqref{H integral est}, yielding
\begin{equation}\label{bdry mass bound III}
\Vsing(B_r(b))\le (4k+2)r^{k-1}\bar{m}(r)+4\Lambda\l^{-\frac 1p} r^\frac kq\bar{m}(r)+4r^k\bar{m}'(r).
\end{equation}
Using this estimate in the monotonicity identity \eqref{monotonicity identity}, we obtain
\begin{equation*}\label{m'bar upper bound}
\bar{m}'(r)-2(4k+2)\left(a\l^{-\frac 1p} r^{-\frac kp}+\k r^{\a-1}\right)\bar{m}(r)\le2\frac{d}{dr}\int_{B_r(b)}\frac{|\nabla^\bot
d|^2}{d^k}\dmv,
\end{equation*}
where we have used the assumption $4\k r^\a<1/2$. Similarly, we obtain
\begin{equation*}\label{m'bar lower bound}
\bar{m}'(r)+2(4k+2)\left(a\l^{-\frac 1p} r^{-\frac kp}+\k r^{\a-1}\right)\bar{m}(r)\ge\frac 12\frac{d}{dr}\int_{B_r(b)}\frac{|\nabla^\bot
d|^2}{d^k}\dmv.
\end{equation*}

Let
\begin{equation*}\label{Psi}
\Psi(r)=4(2k+1)\left(a\l^{-\frac 1p}\left(1-\frac kp\right)^{-1}r^{1-\frac kp}+\a^{-1}\k r^\a\right).
\end{equation*}
Multiplying the first inequality by $e^{-\Psi(r)}$ and the second  by $e^{\Psi(r)}$, we have that
\begin{equation*}
\left(e^{-\Psi(r)}\bar{m}(r)\right)'\le 2\frac{d}{dr}\int_{B_r(b)}\frac{|\nabla^\bot
d|^2}{d^k}\dmv,
\end{equation*}
and
\begin{equation*}
\left(e^{\Psi(r)}\bar{m}(r)\right)'\ge\frac 12\frac{d}{dr}\int_{B_r(b)}\frac{|\nabla^\bot
d|^2}{d^k}\dmv.
\end{equation*}
The lemma then follows by integrating these two inequalities.
\end{proof}

\section{Boundary Regularity Theorem}\label{Boundary Regularity Theorem}
In this section, we state and sketch the proof of the main regularity theorem, Theorem ~\ref{main}. 
We will use the following definition, borrowed from \cite{al2}, as it simplifies the exposition of the theorems and ensures our statements more closely resemble those of \cite{al2}.

\begin{definition}\label{B}
We say that $(V, B)\in \B(\d, \eta)$ if $B$, $V$ are as defined in Section ~\ref{notation and preliminaries} and 
\begin{itemize}
\item[(i)] they satisfy \eqref{bdry smoothness} and properties \eqref{P1}, \eqref{fvcpt} and \eqref{mc} with $R=1$ and $\k, \Lambda\le \eta$,
\item[(ii)] $\o_k^{-1}\mv(B_1(0))\le \frac{1+\d}{2}$, and
\item[(iii)] $T_0B=Y:=\R^{k-1}\times\{0\}^{n-k+1}$.
\end{itemize}
\end{definition}

\begin{remark}
For $\eta$  small enough, depending on $\a, p$ and $k$, the results of Section ~\ref{First variation and monotonicity} hold for any $(V, B)\in \B(\d,\eta)$. Remark ~\ref{rescale} explains why one can, without loss of generality, make the assumption that $R=1$ and $\eta$ is small. Finally, the assumption on $T_0B$ is not restrictive, as one can always achieve this by a rotation.
\end{remark}

The following definition will  also be needed later.
\begin{definition}\label{ozc}
For each $x\in\R^{n+k}\cap B_1(0)$, we define $\o(x)$ to be the unique point in $B\cap B_{\sqrt {1+\k^2}}(0)$
such that $x-\o(x)\in Y^\bot$ and $\zeta(x)$ to be the real valued function $\zeta(x)=|x-\o(x)|$. Furthermore, 
for all $x\notin B$ we define $\chi(x)$ to be the projection of $\R^{n+k}$ onto the subspace $Y+\{t(a-\o(a)):t\in \R\}$.
\end{definition}

We now state our main regularity theorem (see \cite[beginning of Section 4]{al2} for the case of a $C^{1,1}$ boundary).
\begin{theorem}\label{main} For any $\e\in (0,1)$, there exists $\d>0$, depending on $\a, p, k$ and $\e$, with the following property. If 
\begin{itemize}
\item[(1)] $0\le\eta\le \d$ and
\item[(2)]  $(V, B)\in \B(\d,\eta)$,
\end{itemize}
then, for some unit vector $u\in Y^\perp$, the following statements hold.
\begin{itemize}
\item[(i)] $\spt V$ has a unique tangent cone at $0$,  given by $\{y+tu:y\in Y, t\ge 0\}$,
\item[(ii)] $E_h:=\left(\int\dist(x, T)^2\dmv\right)^\frac12\le \e$, where $T=\{y+tu:y\in Y, t\in\R \}$,
\item[(iii)] $\spt V\cap (B_{1-\e}(0)\setminus B)$ is a continuously differentiable $k$-dimensional submanifold of $\R^{n+k}$ which is closed relatively to $B_{1-\e}(0)\setminus B$, whose closure contains $B_{1-\e}(0)\cap B$, and which  projects (under $\proj_T$) univalently on $\proj_T(\spt V\cap (B_{1-\e}(0)\setminus B))$,
\item[(iv)] $\|\proj_{T_x M}-\proj_T\|\le c\sup\{E_h,\eta\}$, $\forall x\in \spt V\cap (B_{1-\e}(0)\setminus B)$
and
\item[(v)]$\|\proj_{T_x M}-\proj_{T_yM}\|\le c\sup\{E_h,\eta\}|x-y|^\gamma$, $\forall x,y\in \spt V\cap (B_{1-\e}(0)\setminus B)$, where $\gamma=\min\{\a, 1-k/p\}$.
\end{itemize}
Here, in (iv) and (v), $c$ is a constant that depends only on $n,k, p, \a$ and $\e$ (and recall  {\emph{$V=\var(M,\theta)$}})
\end{theorem}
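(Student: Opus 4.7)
The plan is to follow the four-step strategy of \cite[Section 4]{al2}, using the Whitney-based substitutes developed in Sections \ref{notation and preliminaries}--\ref{First variation and monotonicity} in place of the nearest point projection onto $B$. Fix $(V,B)\in\B(\d,\eta)$ with $\d,\eta$ to be chosen small. First I would combine the $p$-monotonicity formula of Lemma \ref{p-monotonicity lemma} with the density bounds of Corollary \ref{density is upper semi continuous} and Corollary \ref{lab} to control $\bar m(r)=r^{-k}\mv(B_r(0))$ on $(0,1]$. The upper bound $\bar m(1)\le \o_k(1+\d)/2$ from Definition \ref{B}(ii), propagated by monotonicity, pins the density $\Theta_V(0)$ close to $\tfrac12$ and forces $\bar m(r)$ to remain close to $\o_k/2$ uniformly in $r$.

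Using the second monotonicity Lemma \ref{monotonicity lemma}, the near-constancy of $\bar m(r)$ implies that the radial tilt integral $\int |\nabla^\perp d|^2/d^k\dmv$ is small in $B_1(0)$. As in the classical Allard argument, this smallness together with Definition \ref{B}(iii) forces $\spt V$ to be Hausdorff-close to some half-plane $T^+=\{y+tu:y\in Y,\,t\ge 0\}$ for a suitable unit vector $u\in Y^\perp$, and the same estimates at every scale identify every blow-up of $V$ at $0$ with this same $T^+$, yielding item (i) while producing the $L^2$ height bound (ii).

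The core non-trivial step is the height--tilt estimate. In \cite[Lemma 4.5]{al2} this relies on the nearest point projection onto the $C^{1,1}$ boundary, which is unavailable for $C^{1,\a}$. Instead I would invoke Theorem \ref{new tilt bound lemma}, which reproduces the estimate by replacing the nearest point projection with the smooth Whitney distance $\r$ of \eqref{rhot} and the auxiliary projection $\o$ of Definition \ref{ozc}, at the price of picking up $\k$ in place of the $C^{1,1}$ norm. Combined with the interior tilt estimate of \cite{al1} applied on balls $B_r(x)\subset B_1(0)\setminus B$, this bounds the $L^2$ tilt-excess of $V$ relative to $T=\{y+tu:y\in Y,\,t\in\R\}$ in $B_{1-\e}(0)$ by $c\sup\{E_h,\eta\}$.

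Once the tilt-excess is controlled, the Lipschitz approximation and reverse Poincar\'e scheme of \cite[Sections 4.6--4.10]{al2} produces a Lipschitz function $f$ on a large subdomain of $T$ whose graph coincides with $M$ off a small set; interior Allard regularity \cite{al1} upgrades this to a $C^1$ graph on $\spt V\cap (B_{1-\e}(0)\setminus B)$, while the boundary monotonicity of Lemma \ref{monotonicity lemma} controls the behaviour up to $B$, giving (iii) and the pointwise tilt bound (iv). For the H\"older estimate (v), I would iterate the tilt-excess decay: on interior balls one has the classical $r^{1-k/p}$ decay, while on balls meeting $B$ the modified tilt lemma contributes an additional $\k r^\a$ term coming from \eqref{bdry smoothness}. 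A Campanato-type iteration balancing these two rates produces the exponent $\gamma=\min\{\a,1-k/p\}$. The principal technical obstacle is the replacement height--tilt estimate, Theorem \ref{new tilt bound lemma}; once this is in hand, the remaining steps parallel \cite[Section 4]{al2} almost verbatim.
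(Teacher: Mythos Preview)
Your proposal correctly identifies the decisive new ingredient---the Whitney-based height--tilt estimate (Lemma~\ref{new tilt bound lemma})---and is broadly faithful to the architecture of \cite[Section 4]{al2}. However, your argument for (i) has a genuine gap. Monotonicity (Lemmas~\ref{p-monotonicity lemma} and \ref{monotonicity lemma}) together with the compactness Lemma~\ref{compactness} tells you that at each scale the rescaled varifold is close to \emph{some} half-plane through $Y$, and hence that every tangent cone is such a half-plane; it does \emph{not} prevent the half-plane from rotating as the scale shrinks. Uniqueness of the tangent cone requires a quantitative rate, and in the paper this is obtained by first invoking Lemma~\ref{interior} (specifically \cite[4.3(2)]{al2}) to produce an initial plane $T\supset Y$ with small height-excess, and then \emph{iterating} the height-excess decay Lemma~\ref{tilt excess} to produce a fixed plane $\wt T$ with the scale-invariant estimate \eqref{Ttilde}; it is \eqref{Ttilde} that forces every blow-up into $\wt T$ and yields (i) and (ii). Your sketch postpones any excess-decay iteration until step (v), so (i) is left unproved.

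There is also an organisational difference worth noting. The paper does not run Lipschitz approximation, harmonic approximation and tilt decay as separate global steps; instead it packages them into the single height-excess decay Lemma~\ref{tilt excess} (the $C^{1,\a}$ analogue of \cite[4.7]{al2}), and then applies this lemma iteratively not only at $0$ but at \emph{every} boundary point $b\in B\cap B_{1-\e/2}(0)$, producing planes $\wt T_b$ with the decay \eqref{exceqn} and the comparison \eqref{wttwtt}. Items (iv) and (v) are then obtained by combining \eqref{exceqn} and \eqref{wttwtt} with Allard's \emph{interior} theorem applied on balls $B_{\sigma(x)}(x)$, $\sigma(x)=\min\{\rho_0(x),\e/8\}$, rather than from a single global tilt bound as you suggest. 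Your ``Campanato-type iteration balancing the two rates'' is morally the same mechanism, but in the paper the balancing already happens inside Lemma~\ref{tilt excess} (note the $\max\{\theta^{1-k/p},\theta^\a\}$ in \eqref{teeq}), so that the final assembly in \eqref{last}--\eqref{last2} is just a triangle-inequality argument.
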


As mentioned in the introduction, using  the results  of Section ~\ref{First variation and monotonicity}, the proof of  Theorem ~\ref{main} follows  that of Allard's for $C^{1,1}$ boundaries, \cite[Section 4]{al2}. The proofs of many of the main ingredients are parallel to that of \cite{al2},  with the exception of a height-tilt estimate, \cite[Section 4.5]{al2} (see Definition ~\ref{excess}). However, for completeness  we present here all the lemmata needed for the proof, providing for each either a proof when required or referring to the corresponding one in \cite{al2} when the argument is identical.  
One first shows the following compactness result. For a sequence of  pairs $(V, B)\in \B(\d_i,\eta_i)$, such that $\d_i, \eta_i\to 0$, one can extract a subsequence that converges to a linear $k$-dimensional half space with boundary given by $Y$, \cite[Section 4.1]{al2}.  More precisely, we have the following.

\begin{lemma}[compactness] \label{compactness}The lemma in  \cite[Section 4.1]{al2} (i.e. 4.1(1) and 4.1(2) of \cite{al2}), with $\B(\d,\eta)$ as in Definition ~\ref{B}, holds.
\end{lemma}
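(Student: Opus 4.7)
The plan is to reduce this compactness statement to a standard varifold compactness argument combined with the monotonicity formulae established in Section \ref{First variation and monotonicity}. First, I would observe that since each $(V_i,B_i)\in\mathcal{B}(\delta_i,\eta_i)$ with $\eta_i\to 0$, Lemma \ref{p-monotonicity lemma} applies uniformly and gives, for every $b\in B_i\cap B_1(0)$ and $r<1-|b|$, a bound of the form $\bar m_i(r)\le C$, where $C$ is independent of $i$. Combined with the interior monotonicity formula of Allard \cite{al1} (which uses only the first variation estimate \eqref{fvcpt}, not the boundary hypothesis), this yields a uniform upper bound for $\mu_{V_i}(K)$ on every compact $K\subset B_1(0)$.

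Next I would extract a subsequence. By the uniform mass bounds and the standard compactness theorem for Radon measures, a subsequence of $\mu_{V_i}$ converges weakly$^*$ to a Radon measure on $B_1(0)$; equivalently, and since $|\delta V_i|$ is controlled by $\Lambda_i+\|\delta V_i\|_{\text{sing}}$, the varifolds $V_i$ themselves have a weakly convergent subsequence $V_i\rightharpoonup V_\infty$ in the sense of varifolds on $B_1(0)$. Simultaneously, since $\kappa_i\to 0$ and $T_0B_i=Y$, the $C^{1,\alpha}$ graphs defining $B_i$ converge in $C^1$ to $Y\cap B_1(0)$; in particular $B_i\to Y$ in Hausdorff distance on compact subsets. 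From \eqref{mc}, $\|H_i\|_{L^p(\mu_{V_i})}\le\eta_i\to 0$, and from Theorem \ref{first variation formula} together with the singular boundary mass estimate \eqref{bdry mass bound II} (which gives $\|\delta V_i\|_{\text{sing}}(B_r(b))\le C r^{k-1}$), one concludes that $\delta V_\infty$ vanishes when tested against any $C^1$ vector field $X$ compactly supported in $B_1(0)\setminus Y$, and moreover that $\|\delta V_\infty\|$ is supported in $Y$ with locally finite mass controlled by $r^{k-1}$.

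At this point one knows $V_\infty$ is a stationary integer rectifiable $k$-varifold in $B_1(0)\setminus Y$ whose singular first variation lies in $Y$, and which satisfies $\omega_k^{-1}\mu_{V_\infty}(B_1(0))\le 1/2$ by the upper semicontinuity coming from assumption (ii) and the fact that $\delta_i\to 0$. The lower bound $\Theta_{V_\infty}(y)\ge 1/2$ at $\mu_{V_\infty}$-a.e.\ point of $Y\cap\mathrm{spt}V_\infty$ follows from Corollary \ref{density is upper semi continuous} and passage to the limit in the monotonicity formula of Lemma \ref{monotonicity lemma}. Applying the monotonicity formula at every boundary point and invoking the equality case, together with Allard's interior regularity for the stationary varifold $V_\infty\llcorner(B_1(0)\setminus Y)$, forces each $\mu_{V_\infty}$-density to equal $1/2$ and the cone structure at every point of $Y$ to be a half $k$-plane orthogonal to $Y$; this rigidity upgrades convergence to the statement that $V_\infty$ is (a multiplicity-one) half-$k$-plane with boundary $Y$, which is precisely the conclusion of 4.1(1)--(2) of \cite{al2}.

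The main obstacle is the third paragraph: extracting genuine half-plane rigidity from the limit, i.e.\ ruling out that $V_\infty$ could be a more complicated stationary cone with singular part along $Y$. The clean way to handle this is to push the ratio $\bar m_\infty(r)$ to its limit as $r\downarrow 0$ using Lemma \ref{p-monotonicity lemma} (in which $\Lambda_0\to 0$ since $\Lambda_i\to 0$) to obtain $\Theta_{V_\infty}(y)\le 1/2$, combine with the lower bound $\ge 1/2$, and then use equality in monotonicity to deduce that $V_\infty$ is a stationary cone; the density-$1/2$ boundary constraint then forces it to be the unique half-plane through $Y$, giving convergence of the supports in Hausdorff distance on compact subsets of $B_1(0)$ as required in 4.1(2) of \cite{al2}.
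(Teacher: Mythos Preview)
Your proposal is essentially correct and follows the same route as the paper, which simply cites Allard's proof of 4.1(1)--(2) verbatim, substituting Corollary~\ref{lab} for Allard's 3.4(1). You are in effect reconstructing Allard's compactness argument using the monotonicity machinery of Section~\ref{First variation and monotonicity}, which is exactly what that one-line citation intends.

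One small point: the paper singles out Corollary~\ref{lab} (the uniform lower bound $m_i(r)\ge\lambda r^k$) as the only replacement needed. In your sketch this bound enters only implicitly---it is what guarantees $0\in\spt V_\infty$ so that the limit is nontrivial and the density discussion at $0$ makes sense---but you never invoke it; you pass directly to $\Theta_{V_\infty}\ge 1/2$ via Corollary~\ref{density is upper semi continuous} without first securing that $\spt V_\infty$ meets $0$. Also, ``half $k$-plane orthogonal to $Y$'' should read ``half $k$-plane containing $Y$'' (it is the conormal $u$ that lies in $Y^\perp$), and the integer-rectifiability of $V_\infty$ requires Allard's integral compactness theorem rather than mere Radon-measure compactness. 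These are cosmetic; the structure of your argument matches the paper's.
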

\begin{proof} The proof is identical to that of   4.1(1) and 4.1(2) iof\cite{al2},  using here Corollary ~\ref{lab}
instead of 3.4(1) of \cite{al2}. 
\end{proof}

This compactness property is a very powerful tool, as it allows 
one to use arguments by contradiction; a technique which is used often throughout the proof. The first application of this compactness theorem, coupled with Allard's interior regularity theorem \cite{al1}, is to  prove an interior regularity lemma for a varifold with boundary. This lemma  provides not only $C^1$ regularity away from the boundary, but also a good geometric picture of that region; for instance, a good description of the tangent spaces (they are ``close'' to the spaces $\chi(x)$, defined in Definition ~\ref{ozc}) and ``smallness'' of the height-excess (see Definition ~\ref{excess}). More precisely,  we have the following.

\begin{lemma}[interior regularity] \label{interior} The lemma  in \cite[Section 4.3]{al2} (i.e. 4.3(1), 4.3(2), 4.3(3)  and 4.3(4) of \cite{al2}), with $\B(\d,\eta)$ as in Definition ~\ref{B}, holds.
\end{lemma}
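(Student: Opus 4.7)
The plan is to follow the structure of the proof of \cite[Section 4.3]{al2}, replacing its key analytic inputs by the $C^{1,\alpha}$ analogues established in Section~\ref{First variation and monotonicity}. The overall strategy is a contradiction argument. Suppose the conclusion of some part of 4.3(1)--(4) fails at a fixed scale $\varepsilon_0 > 0$; then there exist sequences $\delta_i, \eta_i \downarrow 0$, pairs $(V_i, B_i) \in \mathcal{B}(\delta_i, \eta_i)$, and interior points $x_i \in \spt V_i \setminus B_i$ at which the conclusion is violated by at least $\varepsilon_0$.

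By Lemma~\ref{compactness}, after passing to a subsequence, $V_i$ converges as varifolds to the multiplicity-one $k$-dimensional half-space $P = \{y + tu_0 : y \in Y,\, t \ge 0\}$ for some unit $u_0 \in Y^\perp$, and $B_i \to Y$ in $C^1_{\mathrm{loc}}$. After further extraction, $x_i \to x_0 \in \spt P$. Set $r_i = \zeta(x_i)/4$, so that $B_{r_i}(x_i) \cap B_i = \emptyset$ for large $i$; on this ball the first variation formula \eqref{1st variation equation} reduces to the interior identity with $\|H_{V_i}\|_{L^p} \le \eta_i \to 0$. The area ratio $\omega_k^{-1} r_i^{-k}\mu_{V_i}(B_{r_i}(x_i))$ converges, via Lemma~\ref{monotonicity lemma} applied at a nearest point $b_i \in B_i$ to $x_i$ together with the varifold convergence, to $\Theta_P(x_0) = 1$.

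Consequently, for $i$ large the hypotheses of Allard's interior regularity theorem \cite{al1} are satisfied on a slightly smaller concentric ball, yielding a $C^{1,\gamma}$ graph representation of $\spt V_i$ there, with tilt and height-excess both tending to zero. The limiting tangent plane is $\mathrm{span}(Y) \oplus \mathbb{R} u_0$, which differs from $\chi(x_i)$ (Definition~\ref{ozc}) by at most the $C^1$-distance between $B_i$ and $Y$, itself bounded by $\eta_i$. This produces the required smallness of tilt relative to $\chi(x_i)$, of height-excess, and of tangent-plane oscillation, contradicting the failure of 4.3.

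The main obstacle is obtaining these estimates \emph{uniformly} in the interior point $x$, in particular when $\zeta(x_i) \to 0$. The resolution, as in \cite[Section 4.3]{al2}, is to rescale by $\zeta(x_i)^{-1}$: Remark~\ref{rescale} shows that the rescaled pair again lies in $\mathcal{B}(\delta_i', \eta_i')$ with $\delta_i', \eta_i' \to 0$, while the boundary monotonicity estimates of Section~\ref{First variation and monotonicity} control the area ratios in the rescaled picture. The compactness-contradiction argument then applies uniformly, yielding bounds depending only on $n, k, p, \alpha$ and $\varepsilon$, as asserted.
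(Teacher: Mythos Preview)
Your proposal is correct and follows essentially the same approach as the paper: both defer to Allard's original contradiction-by-compactness argument in \cite[Section 4.3]{al2}, with the $C^{1,1}$ inputs replaced by their $C^{1,\alpha}$ analogues from Section~\ref{First variation and monotonicity}. The paper's proof is a one-line pointer noting the specific substitutions (\eqref{bdry smoothness} for 2.1(1) and 2.2(4)(a), and Corollary~\ref{density is upper semi continuous}(3) for 3.5(3) of \cite{al2}), whereas you spell out the structure of that argument explicitly; your sketch is faithful to it, though the area-ratio control in interior balls comes more directly from interior monotonicity and varifold convergence than from Lemma~\ref{monotonicity lemma}.
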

\begin{proof} The proof is identical to that of 4.3(1)-(4) of \cite{al2},  using here  \eqref{bdry smoothness} instead of 2.1(1) and 2.2(4)(a) of \cite{al2}. We note also that 3.5(3) of \cite{al2} is Corollary ~\ref{density is upper semi continuous}(3) here.
\end{proof}

One then proceeds with what is known as a {\it Lipschitz approximation lemma}. This lemma states that ``most'' of $V$ can be approximated by the graph of a Lipschitz function, leaving out a part of ``large'' tilt-excess (see Definition \ref{excess}). It also provides ``good'' $L^2$ estimates of the approximating function around the boundary, something which is used later in the proof to further approximate this Lipschitz function by a harmonic one and as a consequence by a linear one. More precisely,  we have the following.

\begin{lemma}[Lipschitz approximation, cf.  Section 4.4 of \cite{al2}]\label{lipschitz approximation lemma}
There exists $\d>0$ such that if $0\le \eta\le \d$, $(V, B)\in \B(\d, \eta)$, $T$ is a $k$-dimensional linear space with $Y\subset T$ and  $N=\spt \mu_V\cap B_{15/16}(0)\setminus B$, then there are functions 
\[f: T\longrightarrow T^\perp\,\,\text{  and  }\,\, F: T\longrightarrow \R^n\]
with the following properties.
\begin{itemize}
\item[(1)] $F(z)=z+ f(z)$ for $z\in T$,
\item [(2)] $\lip f\le 1$,
\item [(3)] $\sup\{|f(z)|:z\in T\}\le \sup \{\dist (x, T):x\in N\}$ (recall  {\emph{$V=\var(M,\theta)$}}),
\item [(4)] for $\mu_V$ almost all $x\in N\setminus\image F$, $\|T_xM-T\|>1/22$ and
\item [(5)]
$\int_{\proj_T(N)\cap \{z:|\proj_{Y^\perp}(z)|<t\}}|f|^2d\H^k\le 2(t^2+2\eta t)\int_{\proj_T(N)}\|Df\|^2 d\H^k+4\eta^2t\o_{k-1}.$
\end{itemize}
\end{lemma}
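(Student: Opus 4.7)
The argument follows Allard's Lipschitz approximation in \cite[Section 4.4]{al2}, with two modifications for the $C^{1,\a}$ setting: all boundary estimates rely on the smoothed distance function $\r$ of Section~\ref{First variation and monotonicity} rather than the nearest point projection onto $B$, and the monotonicity formulas of Lemma~\ref{monotonicity lemma} and Corollary~\ref{lab} play the role of the corresponding statements in \cite{al2}.

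The plan is, first, to partition $N$ into a ``good'' set $G$ (where the tangent plane is close to $T$) and a ``bad'' set with large tilt. Set
\[ G := \{x \in N : \|\proj_{T_xM} - \proj_T\| \le 1/22\}. \]
The interior regularity lemma (Lemma~\ref{interior}) guarantees that near each $x \in G$, $M$ is a $C^1$-graph over $T$, and standard tilt-to-Lipschitz arguments (propagating the pointwise tilt bound along short graph segments in $G$) yield $|\proj_{T^\perp}(x_1-x_2)| \le |\proj_T(x_1-x_2)|$ for all $x_1, x_2 \in G$. Consequently, $f_0(\proj_T(x)) := \proj_{T^\perp}(x)$ is well-defined and $1$-Lipschitz on $\proj_T(G)$.

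Next, I would extend $f_0$ by Kirszbraun's theorem to $f: T \to T^\perp$ with $\lip f \le 1$ and set $F(z) = z + f(z)$. Properties (1)--(2) are built in; (3) is immediate since the extension can be chosen to preserve the sup-bound $\sup|f_0| \le \sup_N \dist(\cdot, T)$; and (4) follows since $N\setminus\image F \subseteq N\setminus G$ by definition of $f_0$.

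The heart of the argument is property (5). Here I would apply the first variation formula \eqref{1st variation equation} to a test vector field of the form
\[ X(x) = \psi(|\proj_{Y^\perp}\proj_T(x)|)\,\proj_{T^\perp}(x)\,\chi(x), \]
where $\psi$ is a cutoff in the $Y^\perp$-coordinate supported essentially on $[0,t]$ and $\chi$ is a spatial cutoff inside $B_{15/16}(0)$. Expanding $\dvg_M X$, the main term produces $\int_{\text{slab}} |\proj_{T^\perp}(x)|^2 \dmv$; the derivative of $\psi$ together with the tilt $\|\proj_{T_xM} - \proj_T\|$ yields, after pushing forward to the graph parametrisation via $F$, a contribution of the form $\int_{\proj_T(N)} \|Df\|^2 d\H^k$. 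The mean curvature term is absorbed using \eqref{mc} and the bound on $\Lambda$ implicit in $(V,B)\in \B(\d,\eta)$. Finally, the boundary contribution $\int_B X\cdot \eta\, d\Vsing$, in view of $\eta(y)\in N_yB$ and the flatness estimate \eqref{bdry smoothness}, contributes a term scaling like $\eta^2 t\, \o_{k-1}$, since $B$ is a graph over $Y$ of size $\lesssim \eta|y|^{1+\a}$. Passing from $|\proj_{T^\perp}(x)|^2$ to $|f|^2$ on $\image F$ via (3), and using the algebraic identity $(t+\eta)^2-\eta^2 = t^2+2\eta t$ to absorb the $\eta$-thickening of the slab forced by the displacement of $B$ from $Y$, produces the stated inequality.

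The principal obstacle is this last step: in Allard's $C^{1,1}$-setting the nearest point projection onto $B$ is smooth and can be used to trivialise the slab geometry near $B$; for $C^{1,\a}$ boundaries this projection is not well-defined, so the cutoff and the boundary term must be handled via the smoothed distance $\r$ of Section~\ref{First variation and monotonicity}. Concretely, one replaces any cutoff of $\dist(\cdot, B)$ by a cutoff of $\r$, using the estimates \eqref{rrhot}, \eqref{rhoDrho} and \eqref{Drho}, and one controls $\Vsing$ on the slab by \eqref{bdry variation as limit}, incurring error terms at most linear in $\k$ (hence in $\eta$), consistent with the form of (5).
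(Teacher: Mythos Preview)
Your proposal rests on a misdiagnosis of where the $C^{1,\a}$ difficulty lies. The paper's proof of this lemma is \emph{identical} to Allard's proof in \cite[Section 4.4]{al2}; no use of the Whitney-smoothed distance $\r$ is required. The reason is that Allard's Lipschitz approximation never invokes the nearest point projection onto $B$. What it uses instead is the map $\o$ of Definition~\ref{ozc}, namely the projection onto $B$ along $Y^\perp=(T_0B)^\perp$. This map is perfectly well defined and Lipschitz for a $C^{1,\a}$ graph: the paper only needs to check (and does, via \eqref{bdry smoothness}) that $|\o(x)-\o(a)|\le 2|x-a|$. The passages in the paper that do require the smoothed $\r$ are Theorem~\ref{first variation formula} and Lemma~\ref{new tilt bound lemma}, corresponding to Allard's \cite[3.1 and 4.5]{al2}; the Lipschitz approximation is not one of them.

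There is also a concrete problem with your proposed derivation of (5). Property (5) is a purely function-theoretic estimate for the Lipschitz map $f:T\to T^\perp$; it does not involve $V$, $H$, or $\Vsing$ at all, so the first variation formula is not the tool. The correct argument is a one-dimensional Poincar\'e-type inequality in the $T\cap Y^\perp$ direction: for $z$ in the slab one writes $f(z)=f(\proj_T\o(F(z)))+\int Df$, uses that $|f|\le\eta$ on $\proj_T(B)$ (because $B$ is within height $\eta$ of $T$ by \eqref{bdry smoothness} with $\k\le\eta$), and that the distance within $T$ from $z$ to $\proj_T(B)$ is at most $t+\eta$. This produces exactly the factor $(t+\eta)^2-\eta^2=t^2+2\eta t$ on the gradient term and the residual $4\eta^2 t\,\o_{k-1}$ from the boundary values. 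Your test vector field $X=\psi\,\proj_{T^\perp}(x)\,\chi$ would instead make $\dvg_M X$ produce the tilt $\|\proj_{T_xM}-\proj_T\|^2$ as its leading term, not $|\proj_{T^\perp}(x)|^2$, so the identity you describe does not materialise.
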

\begin{remark} The above statement differs from that of \cite[Section 4.4]{al2} only in the constant appearing in the second term of the RHS of (5). This happens because of the different constants used in the definition of the ``regularity'' of the boundary---in particular, the $\k$ in \eqref{bdry smoothness} here versus the $\frac{\k}{2}$ in 2.1(1) of \cite{al2}---and a minor typographical error in the proof of \cite[Section 4.4]{al2} (page 435, line 7).
\end{remark}
\begin{proof}[Proof of Lemma ~\ref{lipschitz approximation lemma}]
The proof is identical to that of the lemma in \cite[Section 4.4]{al2}.  We remark that the results of \cite[Section 4.2]{al2} used in the proof are still valid in our case. In particular,  \cite[Section 4.2]{al2}, is used to prove that $|\o(x)-\o(a)|\le 2|x-a|$, $\forall\,x,a\in B_{1-\e}(0)$, with $\e\in (0,1)$ (see Definition ~\ref{ozc}). This is true in our case as well, because, by the triangle inequality and \eqref{bdry smoothness}, we have that
\begin{align*}
|\proj_{Y}(\o(x)-\o(a))|^2&\ge|\o(x)-\o(a)|^2-|\proj_{Y^\bot}(\o(x)-\o(a))|^2\ge\left(1-\k^2\right)|\o(x)-\o(a)|^2
\end{align*}
and hence $|\o(x)-\o(a)|\le(1-\k^2)^{-\frac12}|\proj_{Y}(x-a)|\le(1-\k^2)^{-\frac12}|x-a|$.
\end{proof}

The next main step is to prove a {\it height-excess decay lemma}. This lemma shows that the height-excess (see Definition ~\ref{excess}) has a ``nice'' decay as we pass to smaller balls; in particular, it decays as a power of the radius.  Before we proceed to this, we first need a further lemma that relates the tilt-excess (see Definition ~\ref{excess})  with the height-excess. The proof of the corresponding lemma in the case of a $C^{1,1}$ boundary,  \cite[Section 4.5]{al2}, as mentioned in the introduction, does not carry over in our case. The reason is that the proof consists of using the first variation formula with a vector field defined by the use of the nearest point projection. For a $C^{1,\a}$ boundary, we prove this result using a different method, introducing again a Whitney partition.  We now state and prove this lemma, providing first a necessary definition. 

\begin{definition}\label{excess}
  \label{tilt excess def} Let {\em $V=\var(M,\theta)$} be a rectifiable $k$-varifold in $\R^{n+k}$ and $T$  a $k$-dimensional subspace of $\R^{n+k}$.
 
  The \emph{tilt-excess} $E(x,r,T)$ of  $V$ with respect to $T$, in $B_r(x)$, is given by
  \[E(x,r,T)=r^{-k}\int_{B_r(x)}\|\proj_{T_{y}M}-\proj_{T}\|^2\dmv(y).\]
  
  The \emph{height-excess} $E_h(x,r,T)$ of  $V$ with respect to $T$, in $B_r(x)$, is given by
  \[E_h(x,r,T)=r^{-k-2}\int_{B_r(x)}\dist(x, T)^2\dmv(y).\]

\end{definition}

\begin{lemma}\label{new tilt bound lemma}
Assume that $B$, $V$ are as defined in Section ~\ref{notation and preliminaries}, i.e. they satisfy \eqref{bdry smoothness} and properties \eqref{P1}, \eqref{fvcpt} and \eqref{mc}, for some $R, \k$ and $\Lambda$.  Then, for any $r<R$ and any $k$-dimensional subspace $T$ with $T_0B\subset T$, we  have that
\begin{align*}\label{new tilt bound}
E(0,r/2,T)\le c\left(E_h(0,r,T)+
r^{2-k}\int_{B_r(b)}|H|^2\dmv+ (\k r^\a)^2 \right), 
\end{align*}
where $c$ is a constant that depends only on $n$ and $k$.
\end{lemma}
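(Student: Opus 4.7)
The plan is to adapt the classical interior tilt-excess bound (see e.g.~\cite[Lemma 22.2]{LSgmt}) by choosing a vector field that vanishes on $B$, so that no boundary term appears in the first variation. Since $B\cap B_{4R}(0)$ is a $C^{1,\a}$ graph of some $f$ over $Y:=T_0B$ with $f(0)=0$, $Df(0)=0$ and $\lip(Df)\le c\k$ (this last fact read off from \eqref{bdry smoothness}(2)), the ``vertical'' projection $\omega:B_R(0)\to B$ defined by $y-\omega(y)\in Y^\perp$ (cf.~Definition \ref{ozc}) is $C^{1,\a}$. Let $\chi$ be a standard cutoff with $\chi\equiv 1$ on $B_{r/2}(0)$, $\spt\chi\subset B_r(0)$, $|\nabla\chi|\le c/r$, and consider the $C^1$ vector field
\[X(y)=\chi^2(y)\,\proj_{T^\perp}(y-\omega(y)).\]
Since $\omega(y)=y$ on $B$, $X$ vanishes on $B$, so \eqref{fvcpt} gives $\int\dvg_MX\,\dmv=-\int X\cdot H\,\dmv$ with no boundary contribution.

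For the divergence, write $U(y)=y-\omega(y)$. The graph representation yields $\omega=\proj_Yy+f(\proj_Yy)$, so $DU=\proj_{Y^\perp}-Df\circ\proj_Y$. Using $Y\subset T$ (equivalently $T^\perp\subset Y^\perp$, so $\proj_{T^\perp}\proj_{Y^\perp}=\proj_{T^\perp}$), the standard identity $\Tr(\proj_{T_yM}\proj_{T^\perp})=\tfrac12\|\proj_{T_yM}-\proj_T\|^2$ combined with the decomposition $\proj_{T_yM}=\proj_T+(\proj_{T_yM}-\proj_T)$ (and $\proj_T\proj_{T^\perp}=0$) gives
\[\dvg_M(\proj_{T^\perp}U)=\tfrac12\|\proj_{T_yM}-\proj_T\|^2+\mathcal E(y),\qquad |\mathcal E(y)|\le c\k r^\a\,\|\proj_{T_yM}-\proj_T\|,\]
where the bound on $\mathcal{E}$ uses $\|Df(\proj_Yy)\|\le c\k|y|^\a\le c\k r^\a$.

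Plugging into the first variation,
\[\int\chi^2\tfrac12\|\proj_{T_yM}-\proj_T\|^2\dmv\le 2\int\chi|\nabla^M\chi||\proj_{T^\perp}U|\dmv+\int\chi^2|\mathcal E|\dmv+\int\chi^2|\proj_{T^\perp}U||H|\dmv.\]
For the first (cutoff) term, the identity $\proj_T\proj_{T^\perp}=0$ yields $\chi\nabla^M\chi\cdot\proj_{T^\perp}U=\chi\nabla\chi\cdot(\proj_{T_yM}-\proj_T)(\proj_{T^\perp}U)$, so that Young's inequality absorbs a small fraction of the LHS. The same $(\proj_{T_yM}-\proj_T)$ factor is already present in $\mathcal E$, so Young's again absorbs and leaves $c\epsilon^{-1}(\k r^\a)^2\mu_V(B_r(0))$. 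The $H$-term is split by Young's into $cr^2\int|H|^2$ and $cr^{-2}\int|\proj_{T^\perp}U|^2$. Throughout, one uses
\[|\proj_{T^\perp}U(y)|\le \dist(y,T)+|\proj_{T^\perp}\omega(y)|\le\dist(y,T)+c\k r^{1+\a},\]
since $\omega(y)\in B\cap B_{cr}(0)$ and \eqref{bdry smoothness}(1) gives $|\proj_{T^\perp}\omega(y)|\le|\proj_{Y^\perp}\omega(y)|\le c\k r^{1+\a}$. Finally, $\mu_V(B_r(0))\le cr^k$ from Lemma \ref{p-monotonicity lemma}, and dividing by $(r/2)^k$ produces the claimed estimate.

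The main obstacle is extracting the correct \emph{quadratic} $(\k r^\a)^2$ dependence rather than the naive linear $\k r^\a$: a crude bound $|\mathcal E|\le c\k r^\a$ would, after integrating against $\mu_V(B_r)\le cr^k$ and dividing, contribute only $c\k r^\a$. The product structure $|\mathcal E|\le c\k r^\a\|\proj_{T_yM}-\proj_T\|$, forced by $\proj_T\proj_{T^\perp}=0$, is precisely what splits via Young's into an absorbable $\epsilon\|\proj_{T_yM}-\proj_T\|^2$ piece plus the desired $c\epsilon^{-1}(\k r^\a)^2$ piece. This same orthogonality trick simultaneously handles the cutoff term—without it, neither that term nor the error could be controlled cleanly in the $C^{1,\a}$ regime.
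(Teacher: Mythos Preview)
Your proof is correct and takes a genuinely different, more elementary route than the paper's. Both approaches test the first variation \eqref{fvcpt} against a compactly supported $C^1$ vector field that vanishes on $B$ and whose tangential divergence produces the term $\tfrac12\|\proj_{T_yM}-\proj_T\|^2$, and both rely on the orthogonality $\proj_T\proj_{T^\perp}=0$ to extract a factor of $\|\proj_{T_yM}-\proj_T\|$ from the error terms, so that Young's inequality yields the quadratic $(\k r^\a)^2$. The difference is in how the vector field is built. The paper first thickens $B$ to the $k$-dimensional $C^{1,\a}$ manifold $\bar B=B+(N_0B\cap T)$, takes a Whitney partition $\mathcal W$ of $\R^{n+k}\setminus\bar B$, and glues the locally constant maps $X_\C(x)=\proj_{T^\perp}(x-y_\C)$ (with $y_\C$ a nearest point on $\bar B$ to the cube center) via the subordinate partition of unity; this reuses the machinery already developed in Section~\ref{First variation and monotonicity} and avoids any global projection onto $\bar B$. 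You instead observe that, since $B\cap B_{4R}(0)$ is a $C^{1,\a}$ graph over $Y=T_0B$, the \emph{vertical} projection $\omega(y)=\proj_Yy+f(\proj_Yy)$ of Definition~\ref{ozc} is itself $C^{1,\a}$ (unlike the nearest-point projection), so $X=\chi^2\proj_{T^\perp}(y-\omega(y))$ is a single globally defined $C^1$ vector field vanishing on $B$. Your argument is shorter and bypasses the Whitney partition entirely; the paper's has the (minor) advantage of fitting the same template used for the monotonicity formulae. One small remark: your final step invokes $\mu_V(B_r(0))\le cr^k$ to close the $(\k r^\a)^2$ term, and the paper's proof implicitly does the same (the integrated $(\k r^\a)^2$ error carries a factor $\mu_V(B_r)$ before division by $r^k$); strictly speaking this uses a mild smallness assumption on $\k R^\a$ so that the monotonicity results of Section~\ref{First variation and monotonicity} apply, consistent with Remark~\ref{new tilt bound-p remark}.
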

\begin{remark}\label{new tilt bound-p remark}
Applying H\"older's inequality to the RHS of the inequality of \emph{Lemma ~\ref{new tilt bound lemma}} yields
\begin{equation*}\label{new tilt bound-p}
\begin{split}
E(0,r/2,T)\le c \bigg(E_h(0,r,T)+
r^{2(1-\frac{k}{p})}\left(\int_{B_r(b)}|H|^p\dmv\right)^\frac 2p + (\k r^\a)^2\bigg).
\end{split}
\end{equation*}
Note that, in this computation, we have bounded the area ratios by a constant, which is be independent of $r$. For this, we require that $\k R^\a, \Lambda R^{1-\frac kp}$ are small enough (depending on $\a, p$ and $k$) so that the monotonicity formula of Lemma ~\ref{monotonicity lemma} holds.  
\end{remark}

\begin{proof}[proof of Lemma ~\ref{new tilt bound lemma}]

Without loss of generality we may assume that $T_0B=\R^{k-1}\times\{0\}^{n-k+1}$ and $T=\R^k\times\{0\}^{n-k}$, and we let
\[\bar B=B+(N_0B\cap T)=B+(\{0\}^{k-1}\times\R\times\{0\}^{n-k}).\]
Then, since $B$ satisfies \eqref{bdry smoothness} for some $R$ and $\k$, $\bar{B}$ is a $C^{1,\a}$, $k$-dimensional manifold for which the following hold. $\bar{B}\cap B_{4R}(0)$ can be written as the graph of a $C^{1,\a}$ function above $\R^k\times\{0\}^{n-k}=T_0\bar B$ and it satisfies \eqref{bdry smoothness}, with $B$ replaced by $\bar B$, and the same $R$ and $\k$. 
We also define  $ \bar \rho_0$ to be the distance from $\bar{B}$, i.e. ${\bar\rho_0}(x)=\dist(x,\bar{B})$. Note that  Remark ~\ref{prelimlem} still holds with $B$ replaced by $\bar{B}$, $\rho_0$ replaced by $\bar\rho_0$ and $\bar{x}$ denoting a point on $\bar B$ (instead of $B$ as usual, see Definition ~\ref{distdef}) such that $|x-\bar{x}|={\bar\rho_0}(x)$.

Let $\mathcal{W}$ be a Whitney partition of $\R^{n+k}\setminus \bar{B}$. Then
  \[B_R(0)\setminus \bar{B}\subset \cup_{\C\in \mathcal{W}}\C,\]
  where the elements $\C$ of the collection $\mathcal{W}$ are closed cubes satisfying  $\dist(\C,\bar{B})>0$ and
  $$\diam\C\le \dist(\C,\bar{B})\le 3\diam\C.$$
  Let $x_\C\in \C$ be the center of the cube $\C$ and $y_\C\in \bar{B}$ be such that $|x_\C-y_\C|=\bar\rho_0(x_\C)$.
  Finally let $\phi_\C$ be a partition of unity
  suboordinate to the covering $\mathcal{W}$ and such that
  \begin{equation*}
  |D\phi_\C(x)|\le c\bar\rho_0(x)^{-1},
  \end{equation*}
  where $c$ is an absolute constant.

  We define the following vector field.
  \[X=\zeta^2\sum_{\C\in\mathcal{W}}\phi_\C X_\C,\]
  where
  \[X_\C(x)=\proj_{T^\bot}(x-y_\C)\]
   and $\zeta$ is a smooth real valued function with compact support in $B_r(0)$ (where $r$ is as in the statement of the lemma) and such that
  \[\zeta(x)=1, \forall x\in B_{r/2}(0)\hbox{  and  } |D\zeta|\le 3/r.\]
  
  Since $X(x)=0$ for all $x\in B\cap B_R(0)$, the first variation formula \eqref{fvcpt} implies that
  \begin{equation}\label{first variation for X}
  \int \dvg_M X\dmv=-\int X\cdot H\dmv.
 \end{equation}
 We will estimate
 \begin{equation}\label{diviX}
 \dvg_M X= 2\zeta \sum_{\C\in\mathcal{W}}\phi_\C \nabla^M\zeta\cdot X_\C+ \zeta^2\sum_{\C\in\mathcal{W}}\nabla^M\phi_C\cdot X_\C+\zeta^2\sum_{\C\in\mathcal{W}}\phi_\C\dvg_M X_\C.
 \end{equation}
 For each $\C\in\mathcal{W}$ we have that
 \begin{equation*}
 \dvg_M X_\C=\frac 12|\proj_{T_xM}-\proj_{T}|^2.
 \end{equation*}
 To see this, let $\pmb{t}=(t^{ij})$ and $\pmb{m}=(m^{ij})$ denote the matrices of the projections onto $T$
  and $T_xM$ respectively. Then
  \begin{equation*}\label{nc}
\begin{split}    |\proj_{T_{x}M}-\proj_{T}|^2&=\sum_{j=1}^{n+k}e_j(\pmb{m}+\pmb{t}-2\pmb{m} \pmb{t})e_j
    =2k-2\sum_{j=1}^{n+k}e_j(\pmb{m}\pmb{t})e_j\\&=2\sum_{j=1}^{n+k}e_j
    (\pmb{m}(I-\pmb{t}))e_j
    =2\sum_{i=k+1}^{n+k}  m^{ii}.
  \end{split}
\end{equation*}
  To estimate the two first terms on the RHS of \eqref{diviX}, note that
  \begin{equation*}
  \begin{split}
\sum_{\C\in\mathcal{W}}\nabla^M\phi_\C\cdot X_\C&=\sum_{\C\in\mathcal{W}}\nabla^M\phi_\C \cdot\left(\proj_{T^\bot}(x-y_\C)-\proj_{T^\bot}(x-\bar{x})\right)\\
&=\sum_{\C \in\mathcal{W}}D\phi_\C\cdot(\proj_{T_xM}\circ\proj_{T^\bot})(\proj_{T^\bot}(\bar{x}-y_\C))
\end{split}
\end{equation*}
(where we recall that here $\bar{x}$ denotes a point on $\bar B$ such that $|x-\bar{x}|={\bar\rho_0}(x)$)
and
\begin{equation*}
\nabla^M\zeta\cdot X_\C(x)=D\zeta\cdot(\proj_{T_xM}\circ\proj_{T^\bot})(\proj_{T^\bot}({x}-y_\C)).
\end{equation*}
Hence, using the Cauchy-Schwartz inequality and Remark ~\ref{prelimlem}, we have that for any $\e>0$
\begin{equation*}\begin{split}
\left|\zeta^2\sum_{\C\in\mathcal{W}}\nabla^M\phi_C\cdot X_\C\right|&\le\e\zeta^2|\proj_{T_xM}-\proj_T|^2+ \frac{c}{\e}\zeta^2|D\phi_\C|^2|\proj_{T^\bot}({\bar x}-y_\C)|^2\\
&\le \e\zeta^2|\proj_{T_xM}-\proj_T|^2+ \frac{c}{\e}(\k r^{\a})^2,
\end{split}
\end{equation*}
since
\[\begin{split}
|\proj_{T^\bot}({\bar x}-y_\C)|^2&\le2 |(\proj_{T^\perp}-\proj_{N_{y_\C}\bar B})({\bar x}-y_\C)|^2+2 |\proj_{N_{y_\C}\bar B}({\bar x}-y_\C)|^2\\
&\le c\k^2 \bar\rho_\C^{2}r^{2\a}\le c\k^2 \bar\rho_0(x)^{2}r^{2\a}
\end{split}\]
and
\begin{equation*}
\begin{split}
|\zeta\nabla^M\zeta\cdot X_\C|&\le |\zeta||D\zeta||\proj_{T_xM}-\proj_T|(|\proj_{T^\bot}(x)|+|\proj_{T^\bot}(y_\C)|)\\
&\le \e \zeta^2|\proj_{T_xM}-\proj_T|^2+ \frac{c}{\e} r^{-2}\left(\dist(x,T)\right)^2+\frac{c}{\e}(\k r^\a)^2.
\end{split}
\end{equation*}

For estimating the RHS of the first variation formula \eqref{first variation for X} we note that
  \begin{equation*}
  \begin{split}
  |X_\C\cdot H|&\le r^{-2}|\proj_{T^\perp}(x-y_\C)|^2+r^2|H|^2\\
  &\le\frac{\dist(x,T)^2}{r^2}+(\k r^\a)^2+2r^2|H|^2.
  \end{split}
  \end{equation*}
    
  Hence, using the above estimates in the first variation formula \eqref{first variation for X}, for sufficiently small $\e>0$, we obtain the required estimate of the lemma 
    \begin{equation*}
  \begin{split}
  E(0,r/2,T)&\le c\left(r^{-k-2}\int_{B_r(b)}\dist(x,T)^2\dmv(x) +
r^{2-k}\int_{B_r(b)}|H|^2\dmv + (\k r^\a)^2\right).
  \end{split}
  \end{equation*}
\end{proof}

\begin{remark}\label{excessrmk} Lemma~\ref{new tilt bound lemma} provides a bound for $E(0, \theta r, T)$ with $\theta=\frac12$. We remark here that the same bound given in Lemma~\ref{new tilt bound lemma} is true for any $\theta\in (0,1)$. The proof is the same,  the only difference being that we should pick the function $\zeta$ such that
  \[\zeta(x)=1, \forall x\in B_{\theta r}(0)\hbox{  and  } |D\zeta|\le \frac{2}{(1-\theta)r}.\]
Of course, in this case the bound (in particular, the constant on the RHS of the estimate of Lemma \ref{new tilt bound lemma})  will depend also on $\theta$.

\end{remark}

We are now ready to state the height-excess decay lemma.
\begin{lemma}[height-excess decay lemma, cf. Section 4.7 of \cite{al2}]\label{tilt excess}
There exist $\theta,\Delta, C>0$ with $\theta\in (0,1)$, $C\in (1,\infty)$ and with the following property. If $(V, B)\in \B(\Delta, \eta)$, with $\eta\le \Delta$
and
\[\mu:=\left(\int_{B_1(0)}\dist^2(x, T)\dmv\right)^\frac12\le \Delta,\]
for some $k$-dimensional subspace $T\subset\R^{n+k}$ such that $Y\subset T$, then there exists $\wt T\subset \R^{n+k}$,
 a $k$-dimensional subspace  such that $Y
\subset \wt T$ and such that
\[\|\proj_T-\proj_{\wt T}\|\le C\mu\]
and
\begin{equation}\label{teeq}
\left(\theta^{-k-2}\int_{B_{\theta }(0)}\dist^2(x, \wt T)\dmv\right)^\frac12\le \max\{\theta^{1-\frac kp}, \theta^\a\}\max\{\mu, C\eta\}.
\end{equation}
\end{lemma}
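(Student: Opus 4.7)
The plan is to run the standard harmonic approximation scheme by contradiction, following \cite[Section 4.7]{al2} with the modifications forced by the $C^{1,\a}$ geometry. Fix $\theta\in(0,1/4)$ and $C>0$ to be chosen later. Assume for contradiction that there exist sequences $\Delta_j\downarrow 0$, $\eta_j\le\Delta_j$, $(V_j,B_j)\in\B(\Delta_j,\eta_j)$, $k$-planes $T_j\supset Y$, and heights $\mu_j=\bigl(\int_{B_1(0)}\dist^2(\cdot,T_j)\,d\mu_{V_j}\bigr)^{1/2}\le\Delta_j$ for which no candidate $\tilde T_j$ satisfies both conclusions of the lemma. By Lemma \ref{compactness}, after passing to a subsequence, $V_j$ converges as varifolds to a half-$k$-plane $T_{\infty,+}$ with boundary $Y$, and $T_j\to T_\infty\supset Y$.

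Next I would produce the harmonic blow-up. Apply Lemma \ref{lipschitz approximation lemma} with $T=T_j$ to obtain Lipschitz approximations $f_j:T_j\to T_j^\perp$ of $V_j$, and set $u_j:=f_j/M_j$ with $M_j:=\max\{\mu_j,C\eta_j\}$. Items (4) and (5) of Lemma \ref{lipschitz approximation lemma} together with the tilt-excess bound of Lemma \ref{new tilt bound lemma} (in the form of Remark \ref{new tilt bound-p remark}) give uniform $W^{1,2}$ bounds on $T_j\cap B_{7/8}(0)$ and $L^2$ control of $u_j$ in strips around $Y$ proportional to $\eta_j/M_j\le 1/C$. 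Extracting a weakly convergent subsequence $u_j\rightharpoonup u$ in $W^{1,2}$, with $u$ defined on the half-plane $T_\infty\cap B_{7/8}(0)\cap\{t\ge 0\}$ and satisfying $u|_Y=0$, and passing to the limit in the first variation identity (Theorem \ref{first variation formula}), using the interior regularity Lemma \ref{interior}, $\Lambda\le\eta_j\to 0$, and the fact that the boundary term of the first variation concentrates on $B_j$ with normals lying in $N_yB_j\to Y^\perp\cap T_\infty$, one concludes that $u$ is weakly harmonic on the half-plane with zero Dirichlet trace on $Y$.

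I would then extract $\tilde T_j$ and close the contradiction. Boundary Schauder regularity gives $u(z)=Du(0)\cdot z+O(|z|^2)$ with $Du(0)|_Y=0$; let $\tilde T_\infty$ be the graph of $Du(0)$ (a $k$-plane through $Y$) and $\tilde T_j$ its rotation adapted to $T_j$, so that $\|\proj_{T_j}-\proj_{\tilde T_j}\|\le C_1 M_j\|Du(0)\|\le C_1\mu_j$ (using $M_j\|u\|_{L^2}\lesssim\mu_j$ from the $L^2$ bound on $f_j$), giving the first conclusion for any $C\ge C_1$. Unraveling the blow-up and collecting errors from (i) the exceptional set $N\setminus\image F_j$ of the Lipschitz approximation, giving an $L^p$ mean-curvature contribution of order $\theta^{1-k/p}\eta_j$ via Lemma \ref{new tilt bound lemma}, (ii) the $C^{1,\a}$ boundary error of order $\theta^\a\k\le\theta^\a\eta_j$ also from Lemma \ref{new tilt bound lemma}, and (iii) the Taylor remainder of order $\theta M_j$, one obtains
\[\bigg(\theta^{-k-2}\int_{B_\theta(0)}\dist^2(\cdot,\tilde T_j)\,d\mu_{V_j}\bigg)^{1/2}\le C_1\theta M_j+C_1\max\{\theta^{1-k/p},\theta^\a\}\eta_j.\]
Choosing $\theta$ small so that $C_1\theta\le\tfrac12\max\{\theta^{1-k/p},\theta^\a\}$ (possible since $\min\{\a,1-k/p\}<1$) and then $C\ge 2C_1$ absorbs both summands into $\max\{\theta^{1-k/p},\theta^\a\}M_j=\max\{\theta^{1-k/p},\theta^\a\}\max\{\mu_j,C\eta_j\}$, contradicting the choice of $(V_j,B_j)$.

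The main obstacle I foresee is verifying that the blow-up limit $u$ is harmonic with zero trace on $Y$. Because $B_j$ is only $C^{1,\a}$-close to $Y$ at scale $\k\le\eta_j$, the Lipschitz graph of $V_j$ vanishes on the curved $B_j$ rather than on the flat $Y$, and the $C^{1,\a}$ excess enters the boundary term $\int_{B_j}X\cdot\eta\,d\|\d V_j\|_{\text{sing}}$ of Theorem \ref{first variation formula} through $\eta(y)\in N_yB_j$. Balancing this $\eta_j$-scale discrepancy against the blow-up scale $M_j$ is precisely why the normalization $M_j=\max\{\mu_j,C\eta_j\}$ (rather than $\mu_j$ alone) is the correct one, and why the $\theta^\a$ factor appears alongside $\theta^{1-k/p}$ in the final decay rate.
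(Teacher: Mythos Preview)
Your proposal is correct and follows essentially the same route as the paper: the paper simply observes that, once Lemma~\ref{new tilt bound lemma} is in hand, Allard's contradiction/harmonic-approximation argument from \cite[Section 4.7]{al2} goes through verbatim, the only changes being the choice of $\theta$ so that $2^{k/2}D_4\theta<\max\{\theta^{1-k/p},\theta^\a\}$ and of $C_i$ so that $\theta^{-(k+2)/2}\Delta_i^{-1}\le C_i\min\{\theta^{1-k/p},\theta^\a\}$. Your sketch spells out exactly this scheme, with the same normalization $M_j=\max\{\mu_j,C\eta_j\}$ and the same mechanism producing the extra $\theta^\a$ rate.
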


\begin{remark} The only difference between the statement of Lemma ~\ref{tilt excess} and the corresponding one in Allard's paper  \cite[Section 4.7]{al2} is the rate of decay of the height-excess, which appears above in \eqref{teeq}. In particular, in \cite{al2}, since $\a=1$, the rate is $\theta^{1-\frac kp}$. However, here we have to account for the case when $\a<1-\frac kp$.
\end{remark}

\begin{proof}[Proof of Lemma ~\ref{tilt excess}] Having at our disposal Lemma ~\ref{new tilt bound lemma}, the proof of Lemma ~\ref{tilt excess} is the same as that of the lemma in \cite[Section 4.7]{al2}, with only two minor modifications concerning the choice of power of $\theta$, either $\a$ or $1-\frac kp$. These modifications occur at the beginning of the proof, where $\theta$ and $C_i$ are chosen (lines 2 and 6 of the proof of the lemma in \cite[Section  4.7]{al2}). In our case  one chooses $\theta$ so that $2^{k/2}D_4\theta <\max\{\theta^{1-\frac kp}, \theta^\a\}$ and $C_i$  so that
$\theta^{-(k+2)/2}\Delta_i^{-1}\le C_i\min\{\theta^{1-\frac kp}, \theta^\a\}$.
\end{proof}

With the use of the height-excess decay lemma, Lemma ~\ref{tilt excess}, the proof of the main regularity theorem, Theorem ~\ref{main}, follows almost as in the case of a $C^{1,1}$ boundary (see \cite[Sections 4.8, 4.9]{al2}). For completeness, and because the arguments in \cite[Sections 4.8, 4.9]{al2} are too compact but mostly because  in  \cite[Section 4.9]{al2} the nearest point (to the boundary) projection is once again used  (even though its regularity is not used), we provide here the proof.

\begin{proof}[Proof of Theorem ~\ref{main}]
Note first that under the assumptions (1) and (2) of Theorem ~\ref{main} and using Lemma ~\ref{interior} (in particular \cite[4.3(2)]{al2}), we can further assume the existence of a $k$-dimensional linear space $T$ that satisfies the hypotheses of the height-excess decay lemma, Lemma ~\ref{tilt excess}, i.e. $Y\subset T$ and
\[\mu=\left(\int\dist(x, T)^2\dmv\right)^\frac12\le \Delta,\] 
where $\Delta$ is as in Lemma ~\ref{tilt excess}. 
 We apply now the height-excess decay lemma, Lemma ~\ref{tilt excess}, iteratively, to conclude that there exists a $k$-dimensional linear space $\wt T$ such that $\|\wt T- T\|\le C\sup \{\mu, \eta\}$ and
\begin{equation}\label{Ttilde}\left(r^{-k}\int_{B_r(0)}\dist (x, \wt T)^2\dmv\right)^\frac12\le C\max\{\mu,\eta\}\max\{r^{2-\frac kp}, r^{1+\a}\}, \forall r\in (0,1)
\end{equation}
(cf. \cite[4.8(4) and proof of the lemma in Section 4.8]{al2}). This iteration is a standard technique that has been used also in the proof of Allard's interior regularity theorem \cite{al1} (for details see \cite[Section 8.17]{al1} or \cite[Proof of Theorem 23.1]{LSgmt}).  In the above inequality and from now on, we will use $C$ to denote any constant that depends on $n,k, p, \a$ and  $\e$, and we will not differentiate between the constants. 
Note that the above inequality, along with a simple rescaling argument, implies that any tangent cone of $V$ at zero is contained in $\wt T$ and, therefore, $V$ has a unique tangent cone at $0$ given by $\{y+tu:y\in Y, t\ge 0\}$, where $u\in \wt T\cap Y^\perp$. Hence we have shown that (i) and (ii) of Theorem \ref{main} hold with $T=\wt T$. 

We now want to show (iv) and (v) of Theorem \ref{main}. Note that statement (iii) of Theorem \ref{main} will then follow by (iv), Lemma~\ref{lipschitz approximation lemma} and Lemma ~\ref{interior}.
As in \cite[Section 4.9]{al2}, for any $b\in B$ we set $T_b= T_bB+ (N_bB\cap T)$. Then $T_b$ is a $k$-dimensional linear space such that $T_bB\subset T_b$ and
$\|\proj_{T_b}-\proj_{T}\|\le C\eta$ (because of  \eqref{bdry smoothness}),
which then implies that $\dist(x-b, T_b)\le \dist (x, T)+ C\eta$. Hence, for any $b\in B\cap B_{1-\frac\e2}(0)$ we have
\[\mu_b:=\left((1-|b|)^{-k-2}\int_{B_{1-|b|}(b)}\dist(x-b, T_b)^2\dmv\right)^\frac12\le C\e^{-\frac {k+2}{2}}(\mu +\eta).\]
 We can now apply the height-excess decay lemma, Lemma ~\ref{tilt excess}, in $B_{\frac{1-|b|}{2}}(b)$ (as we did in $B_1(0)$) provided that $\o_k^{-1}\left(\frac{1-|b|}{2}\right)^{-k}\mv\left(B_{\frac{1-|b|}{2}}(b)\right)\le \frac12+\Delta$, where $\Delta$ is as in Lemma ~\ref{tilt excess}.  For $\d$ small enough, a straightforward contradiction argument using Lemma \ref{compactness} implies that this is indeed  true for all $b\in B\cap B_{1-\frac\e2}(0)$ (this fact is also used in the proof of the lemma in \cite[Section 4.3]{al2}).
 Thus, arguing as for $\wt T$ above, we conclude that there exists a $k$-dimensional  linear space $\wt T_b$ such that $\|\proj_{\wt T_b}- \proj_{T_b}\|\le C\sup \{\mu, \eta\}$ and
\begin{equation}\label{exceqn}
\left(r^{-k}\int_{B_r(b)}\dist (x-b, \wt T_b)^2\dmv\right)^\frac12\le C\max\{\mu,\eta\}\max\{r^{2-\frac kp}, r^{1+\a}\},\forall r\in \left(0, \frac{1-|b|}{2}\right).
\end{equation}
We claim now that
\begin{equation}\label{wttwtt}
\|\proj_{\wt T_y}-\proj_{\wt T_b}\|\le C\max\{\mu, \eta\}\max\{|y-b|^{1-\frac kp}, |y-b|^\a\}\,,\,\,\forall y, b\in B\cap B_{1-\frac\e2}(0).
\end{equation}
Note first that it suffices to prove this for $y, b\in B$ such that $|y-b|<\frac\e8$, because else the claim is  true with an appropriately chosen constant $C$ (since  $\|\proj_{\wt T_b}-\proj_{\wt T_y}\|\le C\sup\{\mu, \eta\}$). If $|y-b|<\frac\e8$, then we can apply \eqref{exceqn} first with $B_r(b)$ replaced by $B_{2|y-b|}(b)$ and then with $B_r(y)$ replaced by $B_{2|y-b|}(y)$. The claim then follows directly by summing these two inequalities  and using a simple application of the triangle inequality (to be precise, we are summing the integrals on their common domain $B_{2|y-b|}(b)\cap B_{2|y-b|}(y)\supset B_{2|y-b|}(b)\cup B_{2|y-b|}(y)$).

We will show now that in any ball of the form $B_{\s(x)}(x)$, where $x\in B_{1-\e}(0)\setminus B$ and $\s(x)=\min\{\r_0(x), \frac\e8\}$, we can apply Allard's interior regularity theorem (see \cite[Section 8]{al1} or \cite[\S23]{LSgmt}). To do this, it suffices  to show that in such balls, the area ratios are close to 1 and that the tilt-excess over some $k$-dimensional linear space  is small.
More precisely, we will show the following. For any $\d_0$, we can pick $\d$ such that under the hypotheses of Theorem \ref{main}
\begin{equation}\label{claim1}
\o_k^{-1}\s^{-n}\mv(B_\s(x))\le 1+\d_0 \,,\forall x\in B_{1-\e}(0)\setminus B
\end{equation}
and
\begin{equation}\label{claim2}
E(x, \s, T_x)\le C\max\{\mu, \eta\} \max\{\s^{2(1-\frac kp)},\s^{2\a}\} \,,\forall x\in B_{1-\e}(0)\setminus B,
\end{equation}
where 
\[\s=\s(x)= \min\left\{\r_0(x), \frac\e8\right\}\text{ and  }T_x=\begin{cases}\wt T\,,\text{ if }\s(x)\ge \frac{\e}{16}
\\ \wt T_{\ov x}\,,\text{ if }\s(x)=\r_0(x)\end{cases}, (\ov x \text{ is as in Definition~\ref{distdef}}).\]
We first show \eqref{claim1},  arguing by contradiction and using Lemma~\ref{compactness} (similar arguments are used in the proof of the lemma in  \cite[Section 4.3]{al2}). Consider a sequence of pairs $(V_i, B_i)$ satisfying  the hypotheses of Theorem \ref{main} with corresponding $\d_i$'s converging to 0 and a sequence of points $x_i\in \spt V_i\cap B_{1-\e}(0)$ for which \eqref{claim1}  is not true. Then, after passing to a subsequence, $x_i\to x\in \ov B_{1-\e}(0)$. Applying now the compactness lemma, Lemma \ref{compactness}, to the sequence $(\eta_{\ov x_i, \s_i \sharp} V_i, \eta_{\ov x_i, \s_i }(B_i))$ if $\s_i=\s(x_i)\to 0$ (where $\ov x$ as in Definition \ref{distdef}), or to $(V_i, B_i)$ otherwise, we arrive at a contradiction. Here, for $x\in \R^{n+k}$ and $\l\in \R^+$, the function $\eta_{x,\l}:\R^{n+k}\to\R^{n+k}$ is given by $\eta_{x,\l}(y)=\l^{-1}(y-x)$, $\eta_{x,\l \sharp} V$ is the mapping of the varifold under $\eta_{x,\l}$ (see \cite[Section 4.2]{al1} or \cite[\S15]{LSgmt}) and $\ov x$ is as in Definition~\ref{distdef}.

To show \eqref{claim2}, we note first that if $\s(x)=\frac{\e}{16}$, then, by \eqref{Ttilde} and Lemma~\ref{new tilt bound lemma} (see also Remark~\ref{excessrmk}), we have
$E(x, \frac{\e}{16}, \wt T)\le c\e^{-k} E(0,1-\frac{\e}{16},\wt T)\le C\max\{\mu, \eta\}$. Assume now that $\s(x)=\r_0(x)$ and take $\ov x$ as in Definition \ref{distdef}. Using  Lemma~\ref{new tilt bound lemma} again and \eqref{exceqn}, we then have
$E(x,\r_0(x), \wt T_{\ov x})\le E(x, 2\r_0(x), \wt T_{\ov x})\le C\max\{\mu, \eta\}\max\{\r_0^{2(1-\frac kp)}, \r_0^{2\a}\}$. Hence the above claim is true.

Note now that \eqref{claim1} and \eqref{claim2}, along with Allard's interior regularity theorem (see \cite[Section 8]{al1} or \cite[\S23]{LSgmt}) applied to the balls $B_{\s(x)}(x)$ and a standard covering argument, implies that (iv) and (v) of Theorem~\ref{main} are true for $x, y\in \spt V\cap B_{1-\e}(0)$ such that $\r_0(x)\ge \frac{\e}{16}$, $\r_0(y)\ge \frac{\e}{16}$.

Now take any point $x\in\spt V\cap B_{1-\e}(0)$, such that $\r_0(x)\le \frac\e8$. By \eqref{claim1} and \eqref{claim2}, we can then apply Allard's interior regularity theorem (see \cite[Section 8]{al1} or \cite[\S23]{LSgmt}) to $B_{\r_0(x)}(x)$, which implies that for a constant $\gamma=\gamma(n,k,p)$, $\|\proj_{TyM}-\proj_{T_xM}\|\le C\max\{\mu,\eta\}|y-x|^{1-\frac kp}$, $\forall y\in B_{\gamma \r_0(x)}(x)$ and thus 
\begin{equation}\label{last}
E(x, \gamma\r_0(x), T_x M)\le C\max\{\mu, \eta\}\r_0^{1-\frac kp}.
\end{equation}
By  \eqref{claim2}, we also have
\[E(x, \gamma\r_0(x), \wt T_{\ov x} )\le\gamma^{-k} E(x, \r_0(x), \wt T_{\ov x} )\le C\max\{\mu, \eta\} \max\{\r_0(x)^{2(1-\frac kp)},\r_0(x)^{2\a}\}.\]
Summing these two inequalities, we obtain 
\begin{equation}\label{last2}
\|\proj_{T_xM}-\proj_{\wt T_{\ov x}}\|\le C\max\{\mu, \eta\} \max\{\r_0(x)^{1-\frac kp},\r_0(x)^{\a}\}.
\end{equation}
 Note that  here we have used the interior monotonicity formula (see \cite[Theorem 5.1(1)]{al1} or \cite[Theorem 17.6]{LSgmt}) to obtain a lower bound for the area. Estimate \eqref{last2}, along with \eqref{wttwtt}, imply (iv) of Theorem~\ref{main} for all $x\in \spt V\cap B_{1-\e}(0)$ such that $\r_0(x)\le\frac\e8$. This concludes the proof of (iv)  of Theorem~\ref{main}.

Finally, to show (v) of Theorem~\ref{main}, we take $x,y\in B_{1-\e}(0)$ such that $\r_0(x), \r_0(y)\le \frac\e8$. If $|y-x|\le \gamma\r_0(x)$, where $\gamma=\gamma(n,k,p)$ is as in \eqref{last}, then (v) of Theorem~\ref{main} is clear by \eqref{last}. If $|y-x|>\gamma\r_0(x)$, then,  using \eqref{last2} (applied to both $x$ and $y$) and \eqref{wttwtt}, we estimate as follows.
\begin{equation*}\label{last3}
\begin{split}
\|\proj_{T_xM}-\proj_{T_yM}\|\le& \|\proj_{T_xM}-\proj_{\wt T_{\o (x)}}\|+\|\proj_{\wt T_{\o (x)}}-\proj_{\wt T_{\o( y)}}\|\\
&+\|\proj_{T_yM}-\proj_{\wt T_{\o (y)}}\|\\
\le& C\max\{\mu, \eta\}\left( \max\{\r_0(x)^{1-\frac kp},\r_0(x)^{\a}\}+\max\{\r_0(y)^{1-\frac kp},\r_0(y)^{\a}\}\right)\\
&+C\max\{\mu, \eta\}\max\{|\o (x)-\o (y)|^{1-\frac kp}, |\o (x)-\o (y)|^\a\},
\end{split}
\end{equation*}
since
\[\begin{split} \|\proj_{T_xM}-\proj_{\wt T_{\o (x)}}\|&\le \|\proj_{T_xM}-\proj_{\wt T_{\ov x}}\|+\|\proj_{\wt T_{\ov x}M}-\proj_{\wt T_{\o (x)}}\|\\
&\le C\max\{\mu, \eta\} \max\{\r_0(x)^{1-\frac kp},\r_0(x)^{\a}\},
\end{split}\]
where we have used the estimate $|\ov x-\o(x)|\le c\r(x)$ (see \eqref{c0}).
We estimate   $\r_0(y)$ and $|\o(x)-\o(y)|$ in terms of $|x-y|$ as follows.  
\[\begin{split}\r_0(y)&\le  |y-\o(y)|\le |x-y|+|\o (x)-\o (y)|+|x-\o(x)|\\
&\le |x-y|+|\o (x)-\o (y)|+c\r(x)\le c|x-y|+|\o (x)-\o (y)|\le c|x-y|,
\end{split}\]
where at the last step we used the estimate $|\o(x)-\o(y)|\le c|x-y|$ (see proof of Lemma ~\ref{lipschitz approximation lemma}). Putting everything together, we obtain
\begin{equation*}
\begin{split}
\|\proj_{T_xM}-\proj_{T_yM}\|\le C\max\{\mu, \eta\}\max\{| x- y|^{1-\frac kp}, | x- y|^\a\}.
\end{split}
\end{equation*}
This concludes the proof of (v) of Theorem~\ref{main} and  completes the proof  of Theorem~\ref{main}.
\end{proof}

\section*{Acknowledgments}
I would like to thank Professor Leon Simon for suggesting this problem to me and for all his helpful advice. Furthermore, I would like to thank Ulrich Menne and Alexander Volkmann for  fruitful discussions on Allard's papers \cite{al1, al2} and I would further like to thank Ulrich Menne  for all his useful suggestions in writing this paper. Finally, I would like to thank Mat Langford for his time in proofreading this paper.

\bibliographystyle{plain}
\bibliography{bibliography}

\end{document}